\newcommand{\oriU}{\mathbf{U}}
\newcommand{\oriV}{\mathbf{V}}
\newcommand{\oriSig}{\mathbf{\Sigma}}
\newcommand{\Sinv}{\overline{X}}
\newcommand{\Sinvplus}{X}
\newcommand{\paraSelect}[1]{#1}
\newcommand{\diff}{\mathrm{d}}
\newtheorem{theorem}{Theorem}[section]
\newtheorem{lemma}[theorem]{Lemma}
\newtheorem{corollary}[theorem]{Corollary}
\title{Global Convergence of Gradient Descent for Asymmetric Low-Rank Matrix Factorization}
\author{%
  Tian Ye\\
  Institute for Interdisciplinary Information Sciences\\
  Tsinghua University\\
  \texttt{yet17@mails.tsinghua.edu.cn} \\
  \And
  Simon S. Du \\
  Paul G. Allen School of Computer Science and Engineering \\
  University of Washington\\
  \texttt{ssdu@cs.washington.edu} \\
}
\begin{document}

\maketitle

\begin{abstract}
We study the asymmetric low-rank factorization problem:
\[\min_{\mathbf{U} \in \mathbb{R}^{m \times d}, \mathbf{V} \in \mathbb{R}^{n \times d}} \frac{1}{2}\|\mathbf{U}\mathbf{V}^\top -\mathbf{\Sigma}\|_F^2\]
where $\mathbf{\Sigma}$ is a given matrix of size $m \times n$ and rank $d$.
This is a canonical problem that admits two difficulties in optimization: 1) non-convexity and 2) non-smoothness (due to unbalancedness of $\mathbf{U}$ and $\mathbf{V}$).
This is also a prototype for more complex problems such as asymmetric matrix sensing and matrix completion.
Despite being non-convex and non-smooth, it has been observed empirically that the randomly initialized gradient descent algorithm can solve this problem in polynomial time.
Existing theories to explain this phenomenon all require artificial modifications of the algorithm, such as adding noise in each iteration and adding a balancing regularizer to balance the $\mathbf{U}$ and $\mathbf{V}$.

This paper presents the first proof that shows randomly initialized gradient descent converges to a global minimum of the asymmetric low-rank factorization problem with a polynomial rate.
For the proof, we develop 1) a new symmetrization technique to capture the magnitudes of the symmetry and asymmetry, and 2) a quantitative perturbation analysis to approximate matrix derivatives. 
We believe both are useful for other related non-convex problems.

\end{abstract}

\section{Introduction}

This paper studies the asymmetric low-rank matrix factorization problem:
\begin{align}
\min_{\oriU \in \mathbb{R}^{m \times d}, \oriV \in \mathbb{R}^{n \times d}} f\left(\oriU, \oriV\right) :=\frac{1}{2}\|\oriU\oriV^\top -\oriSig\|_F^2. \label{eqn:original_problem}
\end{align}
where $\oriSig \in \mathbb{R}^{m \times n}$ is a given matrix of rank $d$.
While solving this optimization problem is not hard (e.g., using power method), in this paper, we are interested in using randomly initialized gradient descent to solve this problem:
\begin{eqnarray}
\oriU_{t+1} =& \oriU_t + \eta (\oriSig - \oriU_t \oriV_t^\top) \oriV_t;\label{Notation_iterU}\\
\oriV_{t+1} = &\oriV_t + \eta (\oriSig - \oriU_t \oriV_t^\top)^\top \oriU_t,\label{Notation_iterV}
\end{eqnarray}
where $\eta > 0$ is the learning rate and $\oriU_0,\oriV_0$ are randomly initialized according to some distribution.
Empirically, gradient descent with a \emph{constant} learning rate can efficiently solve this problem (see, e.g., Figure 1 in \cite{du2018algorithmic}).
Somehow surprisingly, there is no global convergence proof of this generic algorithm, let alone convergence rate analysis.
The main difficulties are 1) the problem is non-convex and 2) this problem is not smooth with respect to $(\oriU,\oriV)$ because the magnitudes of them can be highly unbalanced.

To motivate the study of gradient descent for this optimization problem, we note that this is a prototypical optimization problem that illustrates the gap between practice and theory.
In particular, the prediction function $\oriU \oriV^\top$ is homogeneous: if we multiply a factor by a scalar $c$ and divide another factor by $c$, the prediction function remains the same.
This homogeneity also exists in deep learning models.
Therefore, progress made in understand~\eqref{eqn:original_problem} can further help us gain understanding on other non-convex problems, such as asymmetric matrix sensing, asymmetric matrix completion, and deep learning optimization.
We refer readers to \cite{du2018algorithmic} for more discussions.

For Problem~\eqref{eqn:original_problem}, \citet{du2018algorithmic} showed gradient flow (gradient descent with the step size $\eta \rightarrow 0$),
\begin{align*}
\dot{\oriU}= \left(\oriSig-\oriU\oriV^\top\right)\oriV~~\text{and}~~
\dot{\oriV} = \left(\oriSig - \oriU \oriV^\top\right)^\top\oriU,
\end{align*}
converges to the global minimum but no rate was given.
Key in their proof is an invariance maintained by gradient flow: $\frac{\diff}{\diff t}\left(\oriU^\top \oriU - \oriV^\top \oriV\right) = 0$.
This invariance implies that if initially the difference between the magnitudes of $\oriU_0$ and $\oriV_0$ is small, then the difference remains small. 
This in turn guarantees the smoothness on the gradient flow trajectory.
\citet{du2018algorithmic} further uses a geometric result (all saddle points in the objective function are strict and all local minima are global minima~\citep{ge2015escaping,ge2017learning,ge2016matrix,li2019symmetry}), and then invokes the stable manifold theorem to show the global convergence of gradient flow~\citep{lee2016gradient,panageas2016gradient}.

However, to prove a polynomial convergence rate, the approach that solely relies on the geometry will fail because there exists a counter example~\citep{du2017gradient}.
Furthermore, for gradient descent with $\eta > 0$, the key invariance no longer holds.\footnote{
While the invariance can still hold \emph{approximately} in some way, characterizing the approximation error is highly non-trivial, and this is one of our key technical contributions.
}

\citet{du2018algorithmic} also studied gradient descent with decreasing step sizes $\eta_t=O\left(t^{-1/2}\right)$, and obtained an ``approximate global optimality result": if the magnitude of the initialization is $O\left(\delta\right)$, then gradient descent converges to a $\delta$-optimal solution, i.e., this result does not establish that gradient descent \emph{converges} to a global minimum.
And again, there was no convergence rate.
Furthermore, their result crucially relies on $\eta_t$ is of order $O\left(t^{-1/2}\right)$ to ensure the second order term does not diverge and thus does not apply to gradient descent with a \emph{constant} learning rate.

%
Some previous works, e.g., \cite{ge2015escaping,jin2017escape}, modified the gradient descent algorithm to the \emph{perturbed gradient descent algorithm} by adding an isotropic noise at each iteration, which can help escape strict saddle points and bypass the exponential lower bound in \cite{du2017gradient}.
To deal with the non-smooth problem, they also added a balancing regularization term~\citep{park2017non,tu2016low,ge2017no,li2019symmetry},
$
\frac{1}{8}\|\oriU^\top \oriU -\oriV^\top \oriV\|_F^2
$
to the objective function to ensure balancedness between $\oriU$ and $\oriV$ throughout the optimization process.
With these two modifications, one can prove a polynomial convergence rate.
However, experiments suggest that the isotropic noise and the balancing regularizer may be proof artifacts, because vanilla gradient descent applies to the original objective function~\eqref{eqn:original_problem} without any regularizer finds a global minimum efficiently.
From a practical point of view, one does not want to add noise or additional regularization because it may require more hyper-parameter tuning.

The only global quantitative analysis for randomly initialized gradient is by \citet{du2018algorithmic} who proved the global convergence rate for the case where $\oriSig$ has rank $1$, and $\oriU$ and $\oriV$ are two vectors.
In this case, one can reduce the problem to the dynamics of $4$ variables, which can be easily analyzed.
Unfortunately, it is very difficult to generalize their analysis to the general rank setting.

In this paper, we develop new techniques to overcome the technical difficulties and obtain the first polynomial convergence of randomly initialized gradient descent for solving the asymmetric low-rank matrix factorization problem.
Most importantly, our analysis is completely different from existing ones:
we give a thorough characterization of the entire trajectory of gradient descent.

Before presenting our main results, we emphasize that the goal of this paper is not to provide new provably efficient algorithms to solve Problem~\eqref{eqn:original_problem}, but to provide a rigorous analysis of an intriguing and practically relevant phenomenon on gradient descent.
This is of the same flavor as the recent breakthrough on understanding Burer-Moneiro method for solving semidefinite programs~\citep{cifuentes2019polynomial}.

\subsection{Main Results}


Our main result is below.

\begin{theorem}
	\label{thm:main}
Suppose each entry of $\oriU_0$ and $\oriV_0$ are initialized using Gaussian distribution with mean $0$ and variance $\varepsilon^2$, where $\varepsilon=\tilde{O}\left(\frac{\sigma_d}{\sqrt{d^3\sigma_1}(m+n)}\right)$.\footnote{$\tilde{O}$ hides logarithmic terms.}
Then there exists $T_{\text{total}}(\delta, \eta) =O\left(\frac{1}{\eta\sigma_d}\ln\frac{d\sigma_d}{\varepsilon}+\frac{1}{\eta\sigma_d}\ln\frac{\sigma_d}{\delta}\right)$ such that for any $\delta > 0$ and learning rate $\eta = O\left(\frac{\sigma_d\varepsilon^2}{d\sigma_1^3}\right)$,  we have that with high probability over the initialization, when $t>T_{\text{total}}(\delta, \eta)$,
$
f\left(\oriU_t, \oriV_t\right)\leq \delta.
$
\end{theorem}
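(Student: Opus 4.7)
The strategy is to divide the trajectory into two epochs matched to the two logarithmic terms in $T_{\text{total}}$: a ``warmup/saturation'' epoch of length $O\!\bigl(\frac{1}{\eta\sigma_d}\ln(d\sigma_d/\varepsilon)\bigr)$ in which the tiny randomly initialized iterates $(\oriU_t,\oriV_t)$ align with the top-$d$ singular subspaces of $\oriSig$ and grow from magnitude $\varepsilon$ up to magnitude $\Theta(\sqrt{\sigma_d})$, followed by a ``local convergence'' epoch of length $O\!\bigl(\frac{1}{\eta\sigma_d}\ln(\sigma_d/\delta)\bigr)$ in which the loss decays geometrically at rate $1-\Omega(\eta\sigma_d)$. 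Throughout, I would maintain by induction three invariants: (i) approximate balancing, $\|\oriU_t^\top\oriU_t-\oriV_t^\top\oriV_t\|$ stays $\tilde O(\varepsilon^2)$; (ii) an operator-norm bound $\|\oriU_t\|,\|\oriV_t\|=O(\sqrt{\sigma_1})$ so that the problem is effectively smooth along the trajectory; and (iii) the ``bad'' components of $\oriU_t,\oriV_t$ in the orthogonal complement of the top-$d$ singular subspaces of $\oriSig$ stay $\tilde O(\varepsilon)$. With $\eta=O(\sigma_d\varepsilon^2/(d\sigma_1^3))$, the per-step perturbations coming from $O(\eta^2)$ terms are small enough to keep these invariants through the entire $T_{\text{total}}$ iterations.

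To implement the approximate balancing, I would compute
\[
\oriU_{t+1}^\top\oriU_{t+1}-\oriV_{t+1}^\top\oriV_{t+1}
=\oriU_t^\top\oriU_t-\oriV_t^\top\oriV_t+\eta^2\,E_t,
\]
where $E_t$ is an explicit quadratic in $(\oriU_t,\oriV_t,\oriSig-\oriU_t\oriV_t^\top)$; the linear-in-$\eta$ cross terms cancel exactly, mirroring the gradient-flow invariance. Summing, one obtains $\|\oriU_t^\top\oriU_t-\oriV_t^\top\oriV_t\|\le \tilde O(\varepsilon^2)+\eta^2\sum_s\|E_s\|$, and the choice of $\eta$ is exactly what is needed to absorb the accumulated discretization error over $T_{\text{total}}$ steps. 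This quantitative perturbation analysis is the rigorous stand-in for the gradient-flow invariance and is the first of the two technical contributions the abstract highlights.

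Given balancing, I would perform the symmetrization step. Project $\oriU_t,\oriV_t$ onto the left and right singular subspaces of $\oriSig$, writing $\oriSig=P\operatorname{diag}(\sigma_1,\dots,\sigma_d)Q^\top$, and let $A_t:=P^\top\oriU_t$, $B_t:=Q^\top\oriV_t$ (the ``signal'' parts) and $A^\perp_t,B^\perp_t$ be the orthogonal residuals. Using (i), one shows that $A_tB_t^\top$ evolves approximately as the \emph{symmetric} matrix-factorization dynamics for $WW^\top$ with $W=[\oriU;\oriV]/\sqrt{2}$ against the block matrix $\bigl[\begin{smallmatrix}0&\oriSig\\\oriSig^\top&0\end{smallmatrix}\bigr]$, for which each coordinate along $\sigma_i$ satisfies $a_i b_i \approx (1+\eta\sigma_i)^2 a_{i-1}b_{i-1}-\eta(a_{i-1}b_{i-1})^2(\cdots)$. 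This gives geometric growth $\|A_tB_t^\top\|\gtrsim (1+\eta\sigma_d)^{2t}\varepsilon^2$ as long as the product is small, together with a matching upper bound preventing overshoot, and simultaneously shows that the orthogonal residuals $A^\perp_t,B^\perp_t$ remain $\tilde O(\varepsilon)$ because their ``drivers'' vanish at the signal subspace and only involve $(\oriSig-\oriU_t\oriV_t^\top)$ restricted to directions of size $O(\sqrt{\sigma_1})$. Random initialization is used here only at $t=0$: with high probability the restriction of $\oriU_0,\oriV_0$ to the signal subspaces is well-conditioned (smallest singular value $\gtrsim \varepsilon/\sqrt{d(m+n)}$), which is the reason for the specific $\varepsilon$ scaling.

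Once the warmup epoch ends, $\oriU_t\oriV_t^\top$ has aligned with $\oriSig$ to within a constant fraction of $\sigma_d$, and $\oriU_t,\oriV_t$ are $\Theta(\sqrt{\sigma_1})$-bounded and $\tilde O(\varepsilon^2)$-balanced. I would then establish a local Polyak--{\L}ojasiewicz inequality $\|\nabla f(\oriU_t,\oriV_t)\|_F^2\ge c\sigma_d\, f(\oriU_t,\oriV_t)$ on this neighborhood (using that the $d$-th singular value of each factor is bounded below), combine it with a local smoothness bound that itself follows from invariants (i)--(ii), and conclude $f(\oriU_{t+1},\oriV_{t+1})\le (1-\Omega(\eta\sigma_d))f(\oriU_t,\oriV_t)$, which closes the logarithmic term $\ln(\sigma_d/\delta)/(\eta\sigma_d)$. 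The main obstacle I anticipate is the warmup phase: maintaining the tight separation ``signal grows exponentially while residuals stay $O(\varepsilon)$'' over $\Theta(\log(1/\varepsilon)/(\eta\sigma_d))$ steps requires carefully tracking how the asymmetric discretization error $\eta^2 E_t$ couples signal and residual subspaces, because any leakage that scales multiplicatively with the growing signal would destroy the bound on the residuals; this is exactly where the quantitative perturbation analysis for matrix derivatives referenced in the abstract is needed, and where the scaling $\eta\propto \varepsilon^2$ is unavoidable.
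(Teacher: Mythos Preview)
Your two–stage skeleton and your local–convergence phase match the paper, and your observation that the linear-in-$\eta$ terms cancel in $\oriU_{t+1}^\top\oriU_{t+1}-\oriV_{t+1}^\top\oriV_{t+1}$ is correct (and is exactly why $\eta\propto\varepsilon^2$ is the right scaling). However, there is a genuine gap in the warm-up phase: the balancing invariant $\|\oriU_t^\top\oriU_t-\oriV_t^\top\oriV_t\|=\tilde O(\varepsilon^2)$ is \emph{not} strong enough to reduce the problem to symmetric dynamics. In the principal $d\times d$ block, writing $A=(U+V)/2$ and $B=(U-V)/2$, one has $U^\top U-V^\top V=2(A^\top B+B^\top A)$, so balancing only controls the symmetric part of $A^\top B$; it says nothing about $\|B\|$ itself (take $A=I$ and $B$ skew-symmetric). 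Likewise, the block-matrix lifting you invoke is not the right reduction: gradient descent on $(\oriU,\oriV)$ is \emph{not} gradient descent on $W=[\oriU;\oriV]/\sqrt{2}$ against $\bigl[\begin{smallmatrix}0&\oriSig\\\oriSig^\top&0\end{smallmatrix}\bigr]$, because the latter would contribute extra $\oriU\oriU^\top$ and $\oriV\oriV^\top$ terms. Consequently you cannot conclude that your signal $A_tB_t^\top$ (the paper's $U_tV_t^\top$) evolves like a symmetric factorization from (i) alone.

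The quantity you must control directly is $B=(U-V)/2$; this is the paper's ``symmetrization technique.'' The difficulty is that the $B$-dynamics contain the term $\eta(AB^\top-BA^\top)A$, whose operator norm is of order $\eta\sigma_1\|B\|$ and would by itself make $\|B\|_{op}$ grow by a factor $(1+\eta\sigma_1)^{T_0}\sim(1/\varepsilon)^{\kappa}$ over the warm-up. The paper's fix is to track $\|B\|_F^2$ rather than $\|B\|_{op}$: in the Frobenius inner product this dangerous term combines with others to give $-\tfrac12\|AB^\top-BA^\top\|_F^2\le 0$, leaving only $-2\eta\,\mathrm{tr}(B^\top P B)$ with $P=\Sigma-AA^\top+BB^\top$. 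One then closes an induction between $\|B\|_F$ and $\lambda_d(P)$ using discrete eigenvalue-perturbation lemmas for the maps $S\mapsto (I+\eta(\Sigma-S))S(I+\eta(\Sigma-S))$ and $P\mapsto (I-\eta(\Sigma-P))P(I-\eta(\Sigma-P))$; \emph{these} lemmas are the ``quantitative perturbation analysis'' the abstract refers to, not the $\eta^2 E_t$ bookkeeping for balancing. Finally, note that the residuals you call $A_t^\perp,B_t^\perp$ (the paper's $J,K$) are in fact the easy part---their operator norms are monotonically nonincreasing---so the obstacle you anticipate is misplaced: the hard invariant is $\|B\|_F\le 2cd\varepsilon$, which your proposal neither isolates nor bounds.
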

Here, $\sigma_1$ and $\sigma_d$ are the largest and the smallest singular values of $\oriSig$, respectively.
Notably, in sharp contrast to the result in \cite{du2018algorithmic}, which requires the initialization depends on $\delta$, our initialization does not depend on the target accuracy.
To our knowledge, this is the first global convergence result for gradient descent in solving Problem~\eqref{eqn:original_problem}.
Furthermore, we give a polynomial rate.
The first term in $T_{\text{total}}(\delta, \eta)$ represents a warm-up phase and the second term represents the local linear convergence phase, which  will be clear in the analysis sections.
On the other hand, while we believe $T_{\text{total}}(\delta, \eta)$ is nearly tight, our requirement for $\eta$ is loose.
An interesting future direction is further relax this requirement.

Now by taking $\eta\rightarrow 0$, we have the following corollary for gradient flow.
\begin{corollary}\label{remark:continuous}
Given $\delta >0$, there exists $T = O\left(\frac{1}{\sigma_d}\ln\frac{d\sigma_d}{\varepsilon}+\frac{1}{\sigma_d}\ln\frac{\sigma_d}{\delta}\right)$, such that with high probability over the initialization, for all $t \ge T$, we have
$
f\left(\oriU_t, \oriV_t\right)\leq \delta.
$\footnote{
In gradient flow, $t$ is a continuous time index.
}
\end{corollary}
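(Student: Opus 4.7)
The plan is to derive Corollary~\ref{remark:continuous} from Theorem~\ref{thm:main} by passing to the limit $\eta \to 0$. First I would rewrite the conclusion of Theorem~\ref{thm:main} in terms of the continuous-time variable $t := \eta \cdot (\text{iteration count})$: the iteration bound satisfies $\eta \cdot T_{\text{total}}(\delta,\eta) = O\!\left(\frac{1}{\sigma_d}\ln\frac{d\sigma_d}{\varepsilon}+\frac{1}{\sigma_d}\ln\frac{\sigma_d}{\delta}\right) =: T$, which is exactly the bound asserted by the corollary and, crucially, is independent of $\eta$. The step-size hypothesis $\eta = O(\sigma_d\varepsilon^2/(d\sigma_1^3))$ is automatic once $\eta$ is small enough, so Theorem~\ref{thm:main} may be applied to a sequence $\eta_k \downarrow 0$.

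Next, the updates~\eqref{Notation_iterU}--\eqref{Notation_iterV} are exactly the forward Euler discretization of the gradient flow ODE. By the classical continuity of Euler discretizations for locally Lipschitz vector fields, for any fixed $t^\star \geq T$ and any sequence $\eta_k \downarrow 0$ with $t^\star/\eta_k \in \mathbb{Z}$, the discrete iterates $(\oriU^{(\eta_k)}_{t^\star/\eta_k}, \oriV^{(\eta_k)}_{t^\star/\eta_k})$ converge to the gradient flow trajectory $(\oriU_{t^\star}, \oriV_{t^\star})$ as $k \to \infty$. Sampling $(\oriU_0,\oriV_0)$ once and sharing it across the flow and all discrete trajectories, the single high-probability event of Theorem~\ref{thm:main} yields $f(\oriU^{(\eta_k)}_{t^\star/\eta_k}, \oriV^{(\eta_k)}_{t^\star/\eta_k}) \leq \delta$ for all sufficiently large $k$, and continuity of $f$ then gives $f(\oriU_{t^\star}, \oriV_{t^\star}) \leq \delta$, as required.

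The main obstacle is ensuring the Euler-to-flow approximation holds \emph{uniformly} in $\eta$ on the entire interval $[0,T]$, which requires a uniform a priori bound on $\|\oriU^{(\eta)}_t\|$ and $\|\oriV^{(\eta)}_t\|$ along the whole discrete trajectory so that the gradient admits a common Lipschitz constant on a common bounded set. Fortunately, such norm controls are an integral byproduct of the trajectory characterization underlying the proof of Theorem~\ref{thm:main}, so I would extract them from that proof rather than re-derive. A clean alternative, which I would keep in reserve as a cross-check, is to redo the key lemmas directly in continuous time: there the invariance $\frac{\diff}{\diff t}(\oriU^\top\oriU - \oriV^\top\oriV)=0$ highlighted in the introduction holds exactly, and the quantitative perturbation analysis that dominates the discrete-time proof becomes unnecessary, yielding a shorter and more self-contained argument at the cost of some duplication with Theorem~\ref{thm:main}.
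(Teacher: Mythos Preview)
Your proposal is correct and follows the same approach as the paper: the paper simply writes ``Now by taking $\eta\rightarrow 0$, we have the following corollary for gradient flow'' and states Corollary~\ref{remark:continuous} with no further argument. Your version supplies the rigorous details (Euler-to-flow convergence, the $\eta$-independence of $\eta\cdot T_{\text{total}}$, and the uniform trajectory bounds extracted from the proof of Theorem~\ref{thm:main}) that the paper leaves implicit.
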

This is also the first convergence rate result of randomly initialized gradient flow for asymmetric matrix factorization.
We note that our analysis  on gradient flow is nearly tight.
 To see this, consider the ordinary differential equation $\dot{a}_t = (\sigma_d - a_t^2)a_T$ with initial point $a_0>0$, then $s=a^2$ has analytical solution $s_t=\frac{\sigma_d e^{2\sigma_d t}}{e^{2\sigma_d t} + \frac{\sigma_d}{a_0^2} - 1}$. Hence, to achieve a $\delta$ optimal solution, i.e. $|\sigma_d - a^2|\leq \delta$, we need $\sigma_d\left(\frac{\sigma_d}{a_0^2} - 1\right)\frac{1}{\delta}\leq e^{2\sigma_d t} + \frac{\sigma_d}{a_0^2} - 1$. Hence $T=\Theta\left(\frac{1}{\sigma_d}\ln\frac{\sigma_d}{
\delta}\right)$ is necessary. 



\subsection{Additional Related Work}
Here we discuss additional related work.
First, in the symmetric setting, e.g., $\min_{\oriU}\|\oriU\oriU^\top - \oriSig \|_F^2 $, global convergence of randomly initialized gradient has been established in various settings~\citep{jain2017global,li2018algorithmic,chen2019gradient}.\footnote{In Appendix~\ref{sec:dynamics_full_rank}, we show the dynamics of gradient flow actually admits a \emph{closed form}, and thus can be easily analyzed.}
However, as has been highlighted in \cite{li2019non,li2019symmetry,park2017non,tu2016low}, generalization to the asymmetric case is highly non-trivial.
The major technical difficulty is to deal with the unbalancedness between $\oriU$ and $\oriV$.
To prevent this, additional balancing regularization is often added~\citep{li2019symmetry,park2017non,tu2016low,sun2016guaranteed}, though empirically this has been shown to be unnecessary.

Another line of work showed one can first uses spectral initialization to find a near-optimal solution, then starting from there, gradient descent converges to an optimum with a linear rate~\citep{tu2016low,zheng2016convergence,zhao2015nonconvex,bhojanapalli2016dropping}, though in practice random initialization often suffices.
Recently, \citet{ma2021beyond} proved that  if  1) the initialization is close to a global minimum and 2) $\oriU$ and $\oriV$ are balanced, then without adding additional balancing regularizer, gradient descent converges to a global minimum.
Our stage two's analysis is similar to theirs.
However, their result cannot be directly applied to our analysis because they require a more stringent initialization than our stage two's initial point. 

\paragraph{Notations.}
Throughout the paper, bold letters, e.g., $\oriU,\oriV, \oriSig$, are reserved for matrices running in the algorithm, non-bold letters, e.g., $U,V,\Sigma$ are for our analysis.
For a matrix $W$ with rank $r$, denote $\sigma_i(W)$ as the $i^{\text{th}}$ largest singular value of $W$, $\forall i\in[r]$. Furthermore, if $W$ is symmetric, denote $\lambda_i(W)$ as the $i^{\text{th}}$ largest eigenvalue of $W$. 
Let $\oriSig\in\mathbb{R}^{m\times n}$ be a rank-$d$ matrix with singular value $\sigma_1\geq\cdots\geq\sigma_d>0$, and define its conditional  as $\kappa:=\frac{\sigma_1}{\sigma_d}$.
Our goal is to factorize $\oriSig$ into $\oriU \oriV^\top$.


\section{Main Difficulties and Technique Overview}
\subsection{A Reduction to Principle and Complement Spaces.}
The starting point is the Polyak-Łojasiewicz condition:  if we can establish that
 $\max\left\{\sigma_d(\oriU_t), \sigma_d(\oriV_t)\right\}$ is lower bounded by a considerable constant $c_{\max}$, then we have $\left\|\nabla f(\oriU,\oriV)\right\|\geq c_{\max}\sqrt{2f(\oriU,\oriV)}$, which implies a linear convergence.
  However, the $d^{\text{th}}$ singular values of $\oriU$ and $\oriV$ are not monotonic with $t$, and they can even decrease to an extremely small value. 

To deal with this issue, we consider the following transformation. 
Let the singular value decomposition of $\oriSig$ is $\oriSig\equiv\Phi\oriSig' \Psi^\top$, where $\Phi\in\mathbb{R}^{m\times m}$ and $\Psi\in\mathbb{R}^{n\times n}$ are unitary matrices, and $\oriSig'$ is diagonal matrix. Define $\oriU'_t:=\Phi^{-1}\oriU_t$ and $\oriV'_t=\Psi^{-1}\oriV_t$. Then we can rewrite equations (\ref{Notation_iterU}) and (\ref{Notation_iterV}) as
\begin{eqnarray}
\oriU'_{t+1} = \oriU'_t + \eta (\oriSig' - \oriU'_t {\oriV'_t}^\top) \oriV'_t;\label{Overview_iterU}\\
\oriV'_{t+1} = \oriV'_t + \eta (\oriSig' - \oriU'_t {\oriV'_t}^\top)^\top \oriU'_t.\label{Overview_iterV}
\end{eqnarray}

Hence, without loss of generality, we can assume $\oriSig$ is a diagonal matrix with $\oriSig_{i,i}=\sigma_i$, $\forall i\in[d]$, and $\oriSig_{i,j}=0$ otherwise.

To proceed, we will analyse the principle space and the complement space separately.
We denote the upper $d\times d$ matrix of $\oriU$ as $U$ and denote the lower $(m-d)\times d$ matrix of $\oriU$ as $J$. 
Similarly, we define the upper $d \times d$ matrix of $\oriV$ as $V$ and the lower $(n-d)\times d$ matrix as $K$.
Define $\Sigma:=\text{diag}(\sigma_1,\cdots,\sigma_d)$. We can write out the dynamics of these matrices:
\begin{eqnarray}
&&U_{t+1} = U_t + \eta(\Sigma - U_t V_t^\top)V_t - \eta U_t K_t^\top K_t;\label{Overview_formU}\\
&&V_{t+1} = V_t + \eta(\Sigma - U_t V_t^\top)^\top U_t - \eta V_t J_t^\top J_t;\label{Overview_formV}\\
&&J_{t+1} = J_t - \eta J_t(V_t^\top V_t + K_t^\top K_t);\label{Overview_formJ}\\
&&K_{t+1} = K_t - \eta K_t(U_t^\top U_t + J_t^\top J_t).\label{Overview_formK}
\end{eqnarray}

\paragraph{Additional Notations}
Throughout this paper, we have some notation conventions.
First of all, if we omit the subscript (iteration number) of a matrix, then it represents that this matrix at any iteration $t$. If some matrices without subscripts appear in the same equation, it means the equation holds for arbitrary iteration $t$, and the subscript for each matrix should be the same. For instance, if we define $A:=\frac{U+V}{2}$, it means we define $A_t:=\frac{U_t+V_t}{2}$, $\forall t\geq 0$.

Besides $A, B, J$ and $K$, there are some other special capital letters used to represent specific matrices throughout this paper. Here is a list.
\begin{eqnarray}
&&S = A A^\top;\notag\\
&&P = \Sigma - A A^\top + B B^\top;\notag\\
&&Q = A B^\top - B A^\top.\notag
\end{eqnarray}

We define such $S$ is because in symmetric case ($B\equiv 0$), although it is hard to find analytical solution for $A$ in continuous time case, we do find analytical form for $S$, which contains all information about the singular values of $A$.

$P$ and $Q$ are just the symmetric and skew-symmetric part of matrix $\Sigma - U V^\top$. Hence the linear convergence of gradient descent is equivalent the linearly diminishing of $P$ and $Q$ by Pythagorean theorem. We will mention their definitions every time we use them.

\subsection{Symmetrization}
Our key observation is that although the singular values of $U$ and $V$ may not have monotonic property, the \emph{symmetrized matrix} has this property.
Formally, we define \[A:=\frac{U+V}{2}~~\text{and} ~~B :=\frac{U-V}{2}.\]
Here, $A$ represents the magnitude in the principle space and $B$ represents the magnitude of asymmetry.
Empirically, we can observe that by choosing a  sufficiently small learning rate $\eta$, we have two desired properties:
\begin{enumerate}
    \item The smallest singular value of $A$ is almost monotonically increasing;
    \item The norms of $B, J, K$ are almost monotonically decreasing.
\end{enumerate}
The first property ensures we are learning the ``signal", $\Sigma$, and the second property ensures the ``noise" is disappearing.
Therefore, if we can establish these two properties, we can prove the global convergence.

\subsection{Two Stage Analysis}
The analysis for asymmetric low rank case is divided into two stages. In the first stage we mainly focus on the increasing rate of $\sigma_d(A)$. We will prove that in gradient descent method $\sigma_d(A_d)$ increases exponentially fast to $\sqrt{\frac{\sigma}{2}}$ and then $\|P\|_{op}$ drops exponentially fast to $\frac{\sigma_d}{4}$, while preserving $\|B\|_F, \|J\|_{op}$ and $\|K\|_{op}$ small. In the second stage, we will use the large $\sigma_d(A)$ to lower bound the convergence speed of $\|\oriSig - \oriU\oriV^\top\|_F^2$. We will prove that, once gradient descent starts at a point with small $\|P\|_{op}$, $\|B\|_F$, $\|J\|_{op}$ and $\|K\|_{op}$, it will converge to global optimal point exponentially fast.


\section{Proof Sketch of Theorem~\ref{thm:main}}

\subsection{Initialization}\label{sect:init}
We first use a Gaussian distribution to generate matrices $U, V, J, K$ element-wisely and independently\footnote{Strictly speaking, we cannot make any assumption on $U, V, J, K$ since they need information of singular value decomposition of $\mathbf{\Sigma}$. However, a random generation of $\mathbf{U}$ and $\mathbf{V}$ implies a random generation of $U, V, J, K$ because we use unitary transformations.}. 
By standard random matrix theory 
(Corollary 2.3.5 and Theorem 2.7.5 of  \cite{tao2012topics}), we know that  $\exists c>0$, such that with high probability, the smallest singular value of $\frac{{U}+{V}}{2}$ is larger than $\frac{1}{c\sqrt{d}}$, the largest singular value of $\frac{{U}+{V}}{2}$ is smaller than $c\sqrt{d}$, the Frobenius norm of $B$ is less than $cd$ and the operator norms of $J$ and $K$ are less than $c\sqrt{\max\{m',d\}}$ and $c\sqrt{\max\{n',d\}}$, respectively, where $m'=m-d, n'=n-d$.

The initializations $U_0, V_0, J_0, K_0$ are then scaled by $\varepsilon$ where $\varepsilon$ specified in Theorem~\ref{thm:main}.


\subsection{Stage One: Warm-Up Phase}
In this stage, we would like to prove the following theorem.

\begin{theorem}\label{thm_main1}
By choosing $\varepsilon=\tilde{O}\left(\frac{\sigma_d}{\sqrt{d^{3}\sigma_1}(m+n)}\right)$ and $\eta = O\left(\frac{\sigma_d\varepsilon^2}{d\sigma_1^3}\right)$, we have that there exists $T_0 = O\left(\frac{1}{\eta\sigma_d}\ln\frac{d\sigma_d}{\varepsilon^2}\right)$, such that $\forall t\leq T_0$,
\begin{itemize}
    \item $\frac{\varepsilon^2}{c^2d}I\preceq A_tA_t^\top\preceq 2\Sigma$;
    \item $\|B_t\|_F\leq 2c d\varepsilon$;
    \item $\sigma_d(A_{T_0})\geq\sqrt{\frac{\sigma_d}{2}}$;
    \item $\sigma_1(P_{T_0})\leq \frac{\sigma_d}{4}$;
    \item $\|J_t\|_{op}\leq c\varepsilon\sqrt{\max\{m', d\}}$, $\|K_t\|_{op}\leq c\varepsilon\sqrt{\max\{n', d\}}$.
\end{itemize}
\end{theorem}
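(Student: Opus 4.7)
The plan is to run a simultaneous induction over $t = 0, 1, \ldots, T_0$ that maintains the five listed invariants together with an additional auxiliary invariant: an exponential lower bound $\sigma_d(A_t)^2 \ge \min\{\sigma_d/2,\; (1+c_1\eta\sigma_d)^t \cdot \sigma_d(A_0)^2\}$ for some absolute constant $c_1 > 0$. The base case is immediate from the initialization bounds in Section~\ref{sect:init}. Given the invariants at step $t$, I would close each one at step $t+1$ separately, as sketched below.

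\paragraph{Symmetrized updates.} Using $U = A+B$, $V = A-B$ and the decomposition $\Sigma - UV^\top = P + Q$ with $P = \Sigma - AA^\top + BB^\top$ symmetric and $Q = AB^\top - BA^\top$ skew-symmetric, the updates (\ref{Overview_formU})--(\ref{Overview_formK}) give
\begin{align*}
A_{t+1} &= A_t + \eta(P_t A_t - Q_t B_t) - \tfrac{\eta}{2}\bigl(U_t K_t^\top K_t + V_t J_t^\top J_t\bigr),\\
B_{t+1} &= B_t + \eta(Q_t A_t - P_t B_t) - \tfrac{\eta}{2}\bigl(U_t K_t^\top K_t - V_t J_t^\top J_t\bigr),
\end{align*}
while $J_{t+1} = J_t\bigl(I - \eta(V_t^\top V_t + K_t^\top K_t)\bigr)$ and symmetrically for $K_{t+1}$. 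These are the workhorses of the whole analysis.

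\paragraph{Key steps.}
First, the $J,K$ invariant is the easiest: under $\|A_t\|_{\text{op}}^2 \le 2\sigma_1$ and $\|B_t\|_F \le 2cd\varepsilon$, one has $V_t^\top V_t + K_t^\top K_t \preceq O(\sigma_1) I$, so my choice $\eta = O(\sigma_d\varepsilon^2/(d\sigma_1^3))$ makes $I - \eta(V_t^\top V_t + K_t^\top K_t)$ a spectral contraction into $[0,1]$, yielding $\|J_{t+1}\|_{\text{op}} \le \|J_t\|_{\text{op}}$ and likewise for $K$. Next, for the exponential growth of $\sigma_d(A_t)$, I would use that the dominant piece of the $A$ update equals the symmetric dynamics $A_t + \eta(\Sigma - A_t A_t^\top) A_t$, with all other pieces controlled: $\|Q_t B_t\|$ and the $UK^\top K$, $VJ^\top J$ terms are quadratic in the small quantities $\|B_t\|, \|J_t\|, \|K_t\|$. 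A Weyl-type perturbation bound applied to $A_{t+1} A_{t+1}^\top$ then gives $\sigma_d(A_{t+1})^2 \ge (1 + c_1\eta\sigma_d)\sigma_d(A_t)^2$ as long as $\sigma_d(A_t)^2 \le \sigma_d/2$, which saturates at time $T_0 = O(\eta^{-1}\sigma_d^{-1}\ln(d\sigma_d/\varepsilon^2))$; symmetrically the upper bound $A_t A_t^\top \preceq 2\Sigma$ is closed by noting that the factor $(\Sigma - A_t A_t^\top)$ becomes non-positive in a direction once the eigenvalue of $A_t A_t^\top$ in that direction reaches $\sigma_i$.

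\paragraph{Main obstacle and end conditions.} The delicate step is the $B$ invariant $\|B_t\|_F \le 2cd\varepsilon$, because a naive Gr\"onwall bound from the $B$ update would give a blowup of $\exp(O(\eta\sigma_1 T_0)) = \exp(O(\kappa \log(d\sigma_d/\varepsilon^2)))$. To avoid this, I would exploit the gradient-flow conservation law $U^\top U - V^\top V = 2(A^\top B + B^\top A) \equiv $ const, which in discrete time holds only approximately but with a per-step drift of order $\eta^2\sigma_1^3$. Summing over $T_0$ iterations, the total drift is $O(\eta\sigma_1^3 T_0/\sigma_d) = O(\varepsilon^2)$ by the choice of $\eta$; this is exactly the quantitative perturbation analysis advertised in the introduction. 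The symmetric part of $A^\top B$ is thus pinned at its tiny initial value, and combining with the upper bound $\|A_t\|_{\text{op}} \le \sqrt{2\sigma_1}$ forces $\|B_t\|_F$ to stay within its allowed window. Finally, for the two end-of-stage conditions at $t = T_0$, I would track the per-eigenvalue convergence of $A_t A_t^\top$: eigenvalue $i$ grows at rate $\sigma_i$, so by the time the slowest ($i=d$) reaches $\sigma_d/2$, each $\lambda_i(A_{T_0} A_{T_0}^\top)$ already lies in a $\sigma_d/4$-neighborhood of $\sigma_i$, which combined with $\|B_{T_0} B_{T_0}^\top\|_{\text{op}} = O(d^2\varepsilon^2) \ll \sigma_d$ yields $\sigma_1(P_{T_0}) \le \sigma_d/4$.
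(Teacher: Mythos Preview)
Your induction scaffold and the treatment of $J,K$ and the growth of $\sigma_d(A_t)$ are in line with the paper, but the argument you give for the $B$ invariant does not go through, and this is the heart of the theorem.

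You propose to control $\|B_t\|_F$ via the approximate conservation of $U^\top U - V^\top V = 2(A^\top B + B^\top A)$. Even granting that the cumulative drift in this quantity over $T_0$ steps is $\tilde O(\varepsilon^2)$, this only pins the \emph{symmetric part} of $A^\top B$; it says nothing about $\|B\|_F$. For a concrete obstruction, take $A=I$ and $B$ skew-symmetric: then $A^\top B + B^\top A = 0$ identically while $\|B\|_F$ is unconstrained. Your final clause (``combining with $\|A_t\|_{\mathrm{op}}\le\sqrt{2\sigma_1}$ forces $\|B_t\|_F$ to stay small'') is therefore a non-sequitur. This is exactly why the paper does \emph{not} rely on the Du--Hu--Lee invariance (which it cites only as motivation) and instead develops a different mechanism: expand $\|B_{t+1}\|_F^2-\|B_t\|_F^2$ directly and observe that the dangerous first-order term $2\eta\langle B_t,\,Q_tA_t\rangle$ equals $-\eta\|Q_t\|_F^2\le 0$, so it drops out entirely. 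What remains is $-2\eta\langle B_tB_t^\top,P_t\rangle$ plus genuinely small $J,K$ and $\eta^2$ corrections, i.e.\ the growth rate of $\|B_t\|_F^2$ is governed by $-\lambda_d(P_t)$. One then needs a \emph{separate} induction on $\lambda_d(P_t)\ge -O(\varepsilon^2(m+n)d\kappa)$, obtained from the $P$-recursion $P_{t+1}\approx(I-\eta(\Sigma-P_t))P_t(I-\eta(\Sigma-P_t))$ via a discrete eigenvalue-tracking lemma (Lemma~\ref{lemm:dynamic P}). Your proposal has no analogue of this $P$--$B$ coupling.

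Two secondary points. First, your justification of $A_tA_t^\top\preceq 2\Sigma$ (``$\Sigma-A_tA_t^\top$ becomes non-positive in a direction once that eigenvalue reaches $\sigma_i$'') tacitly assumes $A_tA_t^\top$ and $\Sigma$ commute, which they do not; the paper instead reads this bound off from $A_tA_t^\top=\Sigma-P_t+B_tB_t^\top$ together with the lower bound on $\lambda_d(P_t)$. Second, the end condition $\sigma_1(P_{T_0})\le\sigma_d/4$ is not obtained by per-eigenvalue tracking of $A_tA_t^\top$ (same commutation issue) but by running the $P$-recursion again after $\sigma_d(A_t)\ge\sqrt{\sigma_d/2}$ and showing $\sigma_1(P_t)$ contracts geometrically; this requires a further $T_2=O(\eta^{-1}\sigma_d^{-1}\ln\kappa)$ iterations, so $T_0=T_1+T_2$.
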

We first give some intuitions about the five conditions in Theorem~\ref{thm_main1}.
The first condition represents the ``signal" is properly bounded from below and above throughout stage one.
The second condition shows the magnitude of asymmetry is small throughout stage one.
We note that it is crucial to study the Frobenius norm of $B$ instead of operator norm, because Frobenius norm admits a nice expansion for analysis.
The third condition is an important one, which guarantees after $T_0$ iterations, we have enough ``signal" strength in the principal space.
The fourth condition is a technical one, which represents the symmetric error is small after $T_0$ iterations.
The fifth condition represents the magnitude of the complement space remains small. 

\paragraph{Proof Sketch.}
The proof of Theorem~\ref{thm_main1} is quite challenging and require new technical ideas and careful calculations, which we explain below.

To analyze the dynamic of $\sigma_d(A)$, let us recall how it behaves in the continuous-time case. 
Our main idea is that, instead of  analyzing $A$ itself, we consider the symmetric matrix $S:=AA^\top$. Then $\frac{\diff S}{\diff t} \approx (\Sigma-S)S + S(\Sigma-S)$ plus some small perturbation terms about $B, J$ and $K$.

If we only consider a differential equation $\dot{S} = (\Sigma-S)S + S(\Sigma-S)$, a well-known theorem  (Theorem 12 in \cite{lax2007linear}) shows that if the singular values of $S$ are different from each other, and $\xi$ is the singular vector that $S\xi=\sigma_d(S)\xi$, then the derivative of $\sigma_d(S)$ is exactly $\xi^\top \dot{S}\xi$, which is lower bounded by $2(\sigma_d - \sigma_d(S))\sigma_d(S)$. To adapt it to discrete case, we prove the following lemma.

\begin{lemma}\label{lemm:dynamic S}
Suppose $S, \Sigma\in\mathbb{R}^{d\times d}$ are two definite positive matrices, $\eta>0$, and $S' = (I + \eta(\Sigma - S))S(I + \eta(\Sigma - S))$. Suppose $\sigma_1(S)\leq 2\sigma_1, \sigma_d(\Sigma)\geq\sigma_d$ and $\sigma_1(\Sigma)\leq \sigma_1$. Define $s=\sigma_d(S)$ and $s'= \sigma_d(S')$. Then $\forall \beta\in(0,1)$ and $\eta\leq\frac{\beta}{8\sigma_1}$,
\begin{eqnarray}
s'\geq (1+\eta(\sigma_d - s))^2s - \frac{8+6\beta}{1-\beta}\sigma_1^3\eta^2.\notag
\end{eqnarray}
\end{lemma}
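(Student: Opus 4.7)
The plan is to reduce the bound on $s' = \sigma_d(S')$ to a Weyl-type eigenvalue argument applied to a symmetric positive semidefinite rewriting of $S'$. Since $S$ is positive definite, its square root $S^{1/2}$ is well-defined and symmetric, so the matrix $S^{1/2}M$ has singular values whose squares simultaneously equal the eigenvalues of $(S^{1/2}M)^\top(S^{1/2}M) = MSM = S'$ and of $(S^{1/2}M)(S^{1/2}M)^\top = S^{1/2}M^2 S^{1/2}$. Hence $s' = \lambda_{\min}(S^{1/2}M^2 S^{1/2})$, which converts a lower bound on the smallest singular value of $S'$ into a lower bound on the smallest eigenvalue of a symmetric PSD matrix, amenable to Weyl's inequality.

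Next, I would expand $M^2 = I + 2\eta(\Sigma - S) + \eta^2(\Sigma - S)^2$ and split
\[
S^{1/2}M^2 S^{1/2} \;=\; (S - 2\eta S^2) \;+\; 2\eta\, S^{1/2}\Sigma S^{1/2} \;+\; \eta^2\, S^{1/2}(\Sigma - S)^2 S^{1/2},
\]
bounding the smallest eigenvalue of each symmetric piece. The third piece is PSD and contributes nonnegatively. For the middle piece, the substitution $y = S^{1/2}x$ for unit $x$ gives $x^\top S^{1/2}\Sigma S^{1/2}x = y^\top\Sigma y \geq \sigma_d \|y\|^2 = \sigma_d\, x^\top S x \geq \sigma_d s$, hence $\lambda_{\min}(S^{1/2}\Sigma S^{1/2}) \geq \sigma_d s$. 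For the first piece, $S$ and $I - 2\eta S$ commute, so $S - 2\eta S^2$ has eigenvalues $s_i(1 - 2\eta s_i)$; the scalar map $t \mapsto t(1 - 2\eta t)$ has derivative $1 - 4\eta t$, which is strictly positive on $[0, 2\sigma_1]$ because the step-size hypothesis $\eta \leq \beta/(8\sigma_1)$ with $\beta < 1$ forces $4\eta t \leq \beta < 1$; consequently the map is increasing on the spectrum of $S$, so $\lambda_{\min}(S - 2\eta S^2) = s(1 - 2\eta s)$. Adding the three pieces through Weyl's inequality yields
\[
s' \;\geq\; s(1 - 2\eta s) + 2\eta\sigma_d s + 0 \;=\; s + 2\eta s(\sigma_d - s).
\]

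The last step converts this into the lemma's form via the algebraic identity $(1 + \eta(\sigma_d - s))^2 s = s + 2\eta s(\sigma_d - s) + \eta^2(\sigma_d - s)^2 s$, so $s' \geq (1 + \eta(\sigma_d - s))^2 s - \eta^2(\sigma_d - s)^2 s$. Since $|\sigma_d - s| \leq 2\sigma_1$ and $s \leq 2\sigma_1$, the residual is at most $8\eta^2 \sigma_1^3$, which already matches the claim (with constant $8$, agreeing with $\tfrac{8+6\beta}{1-\beta}$ at $\beta = 0$; the extra $\beta$-dependent slack in the stated constant likely reflects a different, slightly looser expansion path taken by the authors).

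The step I expect to be the main obstacle is securing the right pairing in the middle piece: the naive chain $s' \geq s\,\sigma_{\min}(M)^2 = s(1 + \eta\lambda_{\min}(\Sigma - S))^2$ is too weak, since $\lambda_{\min}(\Sigma - S)$ can be as negative as $\sigma_d - 2\sigma_1$ rather than $\sigma_d - s$, which would lose a factor of $\sigma_1/s$ in the error and ruin the analysis when $s$ is small. The substitution $y = S^{1/2}x$ in the middle piece is the technical heart of the argument: it correctly couples the smallest direction of $S$ to the smallest eigenvalue of $\Sigma$ rather than to an arbitrary direction of $\Sigma - S$, and this is exactly the refinement needed to recover the sharper rate $(1 + \eta(\sigma_d - s))^2 s$ up to an $O(\eta^2 \sigma_1^3)$ additive error.
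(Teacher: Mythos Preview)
Your proof is correct and in fact yields the sharper constant $8$ in place of the paper's $\tfrac{8+6\beta}{1-\beta}$, but the route is genuinely different from the one the authors take. The paper expands $S'$ directly and performs a $\beta$-parametrized three-way split
\[
S' \;=\; \bigl(\beta S - 2\eta S^2 + \eta^2 S^3\bigr) \;+\; \Bigl(\sqrt{1-\beta}\,I + \tfrac{\eta}{\sqrt{1-\beta}}\Sigma\Bigr)S\Bigl(\sqrt{1-\beta}\,I + \tfrac{\eta}{\sqrt{1-\beta}}\Sigma\Bigr) \;+\; \eta^2\Bigl(-\tfrac{\beta}{1-\beta}\Sigma S\Sigma - \Sigma S^2 - S^2\Sigma\Bigr),
\]
bounding the first piece by monotonicity in the spectrum of $S$, the second via $\lambda_{\min}(NSN)\geq \lambda_{\min}(N)^2\lambda_{\min}(S)$, and absorbing the indefinite $\eta^2$ cross terms into the third piece, which produces the $\beta$-dependent error. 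Your conjugation trick $\lambda_{\min}(MSM)=\lambda_{\min}(S^{1/2}M^2S^{1/2})$ is the cleaner move here: it lets you drop the entire $\eta^2(\Sigma-S)^2$ contribution as a nonnegative PSD term, so no auxiliary parameter is needed and the error comes only from rewriting $s+2\eta s(\sigma_d-s)$ in the squared form. What the paper's approach buys is uniformity: the same $\beta$-split is reused verbatim for the companion Lemma~\ref{lemm:dynamic P}, where $P$ is merely symmetric (possibly indefinite when $p<0$), so $P^{1/2}$ is unavailable and your conjugation step would not transfer directly. Your argument is the more elegant one for this lemma in isolation; theirs is the one that carries over without modification to the indefinite case.
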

This lemma shows if we ignore perturbations from $B, J$ and $K$, then for small $\eta$ (when $\eta^2$ is of smaller order than the first term), the least eigenvalue of $S$ increases at a \emph{geometric} rate.

However, there are also some small perturbation terms about $B, J$ and $K$ while doing analysis. $\|J\|_{op}$ and $\|K\|_{op}$ are easy to give an upper bound, since by \eqref{Overview_formJ} and \eqref{Overview_formK}, we know that by choosing small enough $\eta$, they are monotonically decreasing. However, the dynamic of $B$ is highly non-trivial. 
After some careful calculations (cf.~\eqref{eq_B1}), we find that the increasing rate of $\|B\|_F^2$ is related to the smallest eigenvalue of $P:=\Sigma-AA^\top+BB^\top$: if $\max\{0,-\lambda_d(P)\}$ is small, then $\|B\|_F^2$ increases slowly.

Now we would like to give a lower bound on $\lambda_d(P)$. Inspired by gradient flow case, $P$ and $S$ are almost complementary of each other, and their dynamic behaves similarly. Hence we have $\dot{P}\approx -(\Sigma-P)P-P(\Sigma-P)$ with some small perturbation terms about $B, J$ and $K$. Hence we can use lemma \ref{lemm:dynamic P} to give a lower bound in discrete case.

\begin{lemma}\label{lemm:dynamic P}
Suppose $P, \Sigma\in\mathbb{R}^{d\times d}$ are two symmetric matrices, $\eta>0$, and $P' = (I - \eta(\Sigma - P))P(I - \eta(\Sigma - P))$. Suppose $\sigma_1(P)\leq 2\sigma_1$ and $\sigma_d I\preceq \Sigma \preceq \sigma_1 I$. Define $p=\lambda_d(P)$ and $p'= \lambda_d(P')$. Then $\forall \beta\in(0,1)$ and $\eta\leq\frac{\beta}{8\sigma_1}$,
\begin{eqnarray}
p' \geq
\left\{
\begin{tabular}{ll}
    $(1-\eta\sigma_d)^2p - \frac{8+6\beta}{1-\beta}\sigma_1^3\eta^2$, & if $p<0$; \\
    $0$, & if $p\geq 0$.
\end{tabular}
\right.\notag
\end{eqnarray}
\end{lemma}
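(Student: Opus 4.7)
The plan is to split into two cases according to the sign of $p = \lambda_d(P)$. When $p \geq 0$, the matrix $P$ is positive semidefinite, and since $M := I - \eta(\Sigma - P)$ is symmetric, the congruence $P' = MPM$ satisfies $\zeta^\top P' \zeta = (M\zeta)^\top P (M\zeta) \geq 0$ for every $\zeta$; thus $P' \succeq 0$ and $p' \geq 0$. This dispatches the easy case.

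For the nontrivial case $p < 0$, the plan is to decompose $P'$ in a way that isolates a leading term admitting a clean Ostrowski-style bound with the factor $(1 - \eta\sigma_d)^2$. Setting $\tilde M := I - \eta\Sigma$ so that $M = \tilde M + \eta P$, direct expansion followed by the identity $\tilde M P^2 + P^2 \tilde M = 2 P^2 - \eta(\Sigma P^2 + P^2 \Sigma)$ yields
\[
P' \;=\; \tilde M P \tilde M \;+\; 2\eta P^2 \;+\; \eta^2 \bigl(P^3 - \Sigma P^2 - P^2 \Sigma\bigr).
\]
Now each piece is handled on its own. For the first piece, $\tilde M$ is symmetric with eigenvalues $1 - \eta\sigma_i$, all strictly positive under the step size restriction $\eta \leq \beta/(8\sigma_1)$, with maximum $1 - \eta\sigma_d$ and minimum $1 - \eta\sigma_1$; Ostrowski's congruence theorem then gives $\lambda_d(\tilde M P \tilde M) = \theta \cdot p$ for some $\theta \in [(1-\eta\sigma_1)^2,\, (1-\eta\sigma_d)^2]$, and since $p < 0$ the worst case is $\theta = (1-\eta\sigma_d)^2$, delivering $\lambda_d(\tilde M P \tilde M) \geq (1-\eta\sigma_d)^2 p$. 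The middle piece $2\eta P^2$ is manifestly PSD. The last piece has operator norm bounded by $C\sigma_1^3 \eta^2$ via $\|P\|_{op}\leq 2\sigma_1$ and $\|\Sigma\|_{op}\leq \sigma_1$. Combining with Weyl's inequality $\lambda_d(A+B) \geq \lambda_d(A) + \lambda_d(B)$ applied twice yields $\lambda_d(P') \geq (1-\eta\sigma_d)^2 p - C\sigma_1^3\eta^2$, and the specific constant $\frac{8+6\beta}{1-\beta}$ should emerge from a careful accounting of the remainder's norm using the full step size restriction.

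The main obstacle is spotting this particular decomposition. A naive Ostrowski applied directly to the full product $MPM$ produces only $\lambda_d(P') \geq \sigma_1(M)^2 p$, but $\sigma_1(M)$ can exceed $1$ by as much as $2\eta\sigma_1$ (whenever $\Sigma - P$ has a negative eigenvalue), introducing an $O(\eta\sigma_1^2)$ error that dwarfs the target accuracy $O(\eta^2\sigma_1^3)$. The insight is that splitting off the ``$\Sigma$-only'' factor $\tilde M = I - \eta\Sigma$ isolates a congruence whose largest eigenvalue is exactly $1-\eta\sigma_d$, and the remaining first-order correction conveniently collapses into the nonnegative $2\eta P^2$, leaving only an $O(\eta^2)$ commutator-type remainder. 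Without this split, the non-commutativity of $P$ and $\Sigma$ would obstruct any clean operator-level inequality already at first order in $\eta$.
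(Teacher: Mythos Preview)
Your approach is correct and in fact cleaner than the paper's. Both arguments dispatch $p\ge 0$ the same way (congruence preserves positive semidefiniteness). For $p<0$, the paper uses a $\beta$-parametrized three-way split
\[
P' \;=\; \bigl(\beta P + 2\eta P^2 + \eta^2 P^3\bigr) \;+\; \Bigl(\sqrt{1-\beta}\,I - \tfrac{\eta}{\sqrt{1-\beta}}\Sigma\Bigr)P\Bigl(\sqrt{1-\beta}\,I - \tfrac{\eta}{\sqrt{1-\beta}}\Sigma\Bigr) \;+\; \eta^2 R_\beta,
\]
arguing that the first bracket commutes with $P$ and that the scalar function $x\mapsto \beta x + 2\eta x^2 + \eta^2 x^3$ is monotone on $[-2\sigma_1,2\sigma_1]$ precisely under $\eta\le \beta/(8\sigma_1)$; the second bracket is a congruence giving $\bigl(\sqrt{1-\beta}-\tfrac{\eta\sigma_d}{\sqrt{1-\beta}}\bigr)^2 p$; and $R_\beta$ is bounded crudely. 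Your decomposition $P' = \tilde M P \tilde M + 2\eta P^2 + \eta^2(P^3 - \Sigma P^2 - P^2\Sigma)$ with $\tilde M = I-\eta\Sigma$ is essentially the $\beta\to 0$ limit of this, but organized so that a single Ostrowski step on $\tilde M P\tilde M$ already delivers $(1-\eta\sigma_d)^2 p$, and the cross term $2\eta P^2$ is manifestly nonnegative---no $\beta$ needed. What the paper buys with $\beta$ is the specific constant in the statement; what you buy is a shorter, parameter-free argument.

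One point to tighten: your remainder $\eta^2(P^3-\Sigma P^2-P^2\Sigma)$ has operator norm at most $16\sigma_1^3\eta^2$, and $16\le \frac{8+6\beta}{1-\beta}$ only holds for $\beta\ge 4/11$, so the stated constant does \emph{not} ``emerge from the step size restriction'' as you suggest. The easy fix is to move $\eta^2 P^3$ into the middle piece: $2\eta P^2 + \eta^2 P^3$ has eigenvalues $\eta p_i^2(2+\eta p_i)\ge 0$ since $|\eta p_i|\le 2\eta\sigma_1\le \beta/4 < 2$, so it is still positive semidefinite. The remaining remainder $-\eta^2(\Sigma P^2+P^2\Sigma)$ has norm at most $8\sigma_1^3\eta^2$, and $8\le \frac{8+6\beta}{1-\beta}$ for every $\beta\in(0,1)$, so the lemma follows as stated.
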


Notice that we use $B$ while analyzing $P$ and use $P$ while analyzing $B$. Hence, during the whole process, we need to bound both of them inductively.

Finally, once $\sigma_d(A)$ increases to a relatively large amount, we can use it to prove that $\|P\|_{op}$ will decrease exponentially fast to $\frac{\sigma_d}{4}$. One cannot simply prove that $P$ converges to zero in this stage, since the perturbation term $B$ will never converge to zero.

Below we give more details.

\subsubsection{Assumptions}\label{sect:assumptions}
We make some assumptions on $A$ and $B$ in iterations $t\leq T_0$, where $T_0$ will be defined at the end of subsubsection \ref{sect:dynamicP}, and we will verify the assumptions in the end.
\begin{itemize}
    \item[(1)] $\frac{\varepsilon^2}{c^2d}I\preceq AA^\top\preceq 2\Sigma$.
    \item[(2)] The Frobenius norm of $B$ is bounded by $e_b d\varepsilon$ for some $e_b\geq c$, where $e_b$ will be determined later\footnote{We will show later that it is appropriate to choose $e_b=2c$.}. Hence its operator norm is also bounded by $e_b d\varepsilon$.
\end{itemize}

\subsubsection{Dynamics on \textit{A}, \textit{B} and \textit{P}}

The dynamics on $J$ and $K$ is trivial, since by equations \eqref{Overview_formJ} and \eqref{Overview_formK}, i.e.
\begin{eqnarray}
&&J_{t+1} = J_t - \eta J_t(V_t^\top V_t + K_t^\top K_t);\notag\\
&&K_{t+1} = K_t - \eta K_t(U_t^\top U_t + J_t^\top J_t),\notag
\end{eqnarray}
we know that if we \paraSelect{choose} $\eta\leq \frac{1}{3\sigma_1}$, one can inductively proved that $0\preceq V_t^\top V_t + K_t^\top K_t\preceq 3\sigma_1 I$ and $0\preceq U_t^\top U_t + J_t^\top J_t\preceq 3\sigma_1 I$ by using the first two assumptions in subsection \ref{sect:assumptions}. And then it follows that the operator norms of $J$ and $K$ are monotonically decreasing in this stage.

However, it is non-trivial to prove that $\|B\|_{op}$ keeps small. We will analyze the dynamics of $A, B$ and $P:=\Sigma - AA^\top + BB^\top$ together inductively.

First of all, from equations \eqref{Overview_formU} and \eqref{Overview_formV}, we can write down the dynamics of $A:=\frac{U+V}{2}$ and $B:=\frac{U-V}{2}$ as following.
\begin{eqnarray}
A_{t+1} &=& A_t + \eta(\Sigma-A_t A_t^\top + B_t B_t^\top) A_t - \eta(A_t B_t^\top - B_t A_t^\top) B_t\notag\\
&& - \eta A_t \frac{K_t^\top K_t + J_t^\top J_t}{2} - \eta B_t \frac{K_t^\top K_t - J_t^\top J_t}{2};\label{Stage1_dyA}\\
B_{t+1} &=& B_t-\eta(\Sigma - A_tA_t^\top + B_tB_t^\top)B_t + \eta(A_tB_t^\top - B_tA_t^\top)A_t\notag\\
&& -\eta A_t\frac{K_t^\top K_t-J_t^\top J_t}{2} - \eta B_t\frac{K_t^\top K_t+J_t^\top J_t}{2}.\label{Stage1_dyB}
\end{eqnarray}

We can further calculate
\begin{eqnarray}
P_{t+1} &=& P_t - \eta P_t(\Sigma - P_t) - \eta(\Sigma - P_t) P_t + \eta^2(P_tP_tP_t - P_t\Sigma P_t) - 2\eta B_t B_t^\top P_t\notag\\
&& - 2\eta P_t B_t B_t^\top - \eta(A_t + \eta P_tA_t)C_t^\top - \eta C_t(A_t + \eta P_tA_t)^\top - \eta^2 C_tC_t^\top\notag\\
&& + \eta(B_t + \eta P_tB_t)D_t^\top + \eta D_t(B_t + \eta P_tB_t)^\top + \eta^2 D_tD_t^\top\label{Stage1_dyP}
\end{eqnarray}
where
\begin{eqnarray}
C_t&:=& - A_t B_t^\top B_t + B_t A_t^\top B_t - A_t\frac{K_t^\top K_t + J_t^\top J_t}{2} - B_t\frac{K_t^\top K_t - J_t^\top J_t}{2};\label{Stage1_formC}\\
D_t&:=& + A_t B_t^\top A_t - B_t A_t^\top A_t - A_t\frac{K_t^\top K_t-J_t^\top J_t}{2} - B_t\frac{K_t^\top K_t+J_t^\top J_t}{2},\label{Stage1_formD}
\end{eqnarray}
are two small perturbation terms.

\subsubsection{Dynamics on \textit{A}}
Given \eqref{Stage1_dyA}, we can give a lower bound for the minimal singular value of $A_{t+1}$.
\begin{eqnarray}
\sigma_d(A_{t+1}) &\geq& \sigma_d(A_t + \eta(\Sigma-A_t A_t^\top) A_t)\notag\\
&& - \eta\left\|B_tB_t^\top A_t - A_tB_t^\top B_t + B_tA_t^\top B_t - A_t \frac{K_t^\top K_t + J_t^\top J_t}{2} - B_t \frac{K_t^\top K_t - J_t^\top J_t}{2}\right\|_{op}.\notag
\end{eqnarray}

For the first part, we could define $S_{t}:=A_t A_t^\top$, and $\overline{S}_{t+1}:= (I + \eta(\Sigma - S_t))S_t(I + \eta(\Sigma - S_t))$. Then according to lemma \ref{lemm:dynamic S}, by \paraSelect{choosing} $\beta=\frac{1}{2}$ and $\eta\leq\frac{1}{16\sigma_1}$, we have\footnote{Please see \eqref{full_eq_standardA} for the full steps for this inequality.}
\begin{eqnarray}
\sigma_d(A_{t+1}) &\geq& \sqrt{(1+\eta(\sigma_d - \sigma_d(A_t)^2))^2 \sigma_d(A_t)^2 - 22\sigma_1^3\eta^2}\notag\\
&& - 1.5\sqrt{2\sigma_1}\eta(e_b^2 + c^2)\varepsilon^2 (m+n)d.\label{eq_standardA}
\end{eqnarray}

For simplicity, we denote $\sigma_{d}(A_t)$ by $a_t$, and define $s_t = \sigma_d(S_t)=a_t^2$.

After some routine computations\footnote{Please see section \ref{sect:full_standardA} for details.}, we can prove that it takes at most $T_1:=O\left(\frac{1}{\eta\sigma_d}\ln\frac{d\sigma_d}{\varepsilon^2}\right)$ iterations to make $a_t$ to at least $\sqrt{\frac{\sigma_d}{2}}$, and additional computations show that, if $a_t$ is always bounded by $\sqrt{2\sigma_1}$, then once $a_t$ becomes larger than $\sqrt{\frac{\sigma_d}{2}}$, it is always larger than $\sqrt{\frac{\sigma_d}{2}}$.

\subsubsection{Dynamics on \textit{P}}\label{sect:dynamicP}
To bound $P_t$ by equation \eqref{Stage1_dyP}, we need to first bound the norms of $C_t$ and $D_t$ by \eqref{Stage1_formC} and \eqref{Stage1_formD}.

By simple triangle inequalities we have\footnote{Please see \eqref{full_eq_CD} for full steps for this inequality.}
\begin{eqnarray}
\|C_t\|_{op}&\leq&\sqrt{2\sigma_1}(e_b^2 + c^2) (m+n)d\varepsilon^2;\notag\\
\|D_t\|_{op}&\leq& 8\sigma_1 e_b d \varepsilon,\notag
\end{eqnarray}
where the last inequality holds when \paraSelect{choosing} $\varepsilon\leq\frac{\sqrt{\sigma_1}e_b d}{c^2(m+n)}$. Then we can conclude that
\begin{eqnarray}
P_{t+1} = (I - \eta(\Sigma - P_t))P_t(I - \eta(\Sigma - P_t)) + E_t,\label{eq_standardP}
\end{eqnarray}
where $E_t$ is a matrix with operator norm less than $O(\eta^2\sigma_1^3 + \eta e_b^2\varepsilon^2(m+n)d\sigma_1 + \eta^2\sigma_1^2 e_b^2d^2\varepsilon^2)$. By \paraSelect{choosing} $\varepsilon\leq\frac{\sqrt{\sigma_1}}{e_b d}$ and $\eta\leq\frac{c^2\varepsilon^2(m+n)d}{\sigma_1^2}$, we have $\|E_t\|_{op}\leq O(\eta e_b^2\varepsilon^2(m+n)d\sigma_1)$. Further more, by \paraSelect{choosing} $\beta=\frac{1}{2}$ and $\eta\leq\frac{1}{16\sigma_1}$ in lemma \ref{lemm:dynamic P}, we have
\begin{eqnarray}
\lambda_d(P_{t+1}) \geq \max\left\{ (1-\eta\sigma_d)^2\lambda_{d}(P_t) - O(\eta e_b^2\varepsilon^2(m+n)d\sigma_1), -O(\eta e_b^2\varepsilon^2(m+n)d\sigma_1)\right\}.\notag
\end{eqnarray}

Because $P_0$ is initially positive, we know that
\begin{eqnarray}
\lambda_d(P_t)\geq - O(e_b^2\varepsilon^2(m+n)d\kappa).\label{eq_Pmin}
\end{eqnarray}

This lower bound verifies the assumption that $A_tA_t^\top\preceq 2\Sigma$, since $A_tA_t^\top = \Sigma - P + B_tB_t^\top \preceq \Sigma + O(e_b^2\varepsilon^2(m+n)d\kappa) I + e_b^2\varepsilon^2 d I\preceq 2\Sigma$ by \paraSelect{choosing} $e_b^2\varepsilon^2 = O\left(\frac{\sigma_d}{(m+n)d\kappa}\right)$.

On the other hand, we can also analyze the operator norm of $P$ by using formula \eqref{eq_standardP}, since $\sigma_d(A_t)\geq \sqrt{\frac{\sigma_d}{2}}$ for $t\geq T_1$. This implies that
\begin{eqnarray}
\sigma_1(P_{t+1})\leq \left(1 - \frac{\eta\sigma_d}{2}\right)^2\sigma_1(P_t) + O(\eta e_b^2\varepsilon^2(m+n)d\sigma_1),\notag
\end{eqnarray}
and it follows that
\begin{eqnarray}
\sigma_1(P_{t+T_1})\leq \left(1 - \frac{\eta\sigma_d}{2}\right)^{2t}\sigma_1(P_{T_1}) + O(e_b^2\varepsilon^2(m+n)d\kappa).\label{eq_Pmax}
\end{eqnarray}

Inequality \eqref{eq_Pmax} shows that we only need at most $T_2:=O\left(\frac{1}{\eta\sigma_d}\ln\kappa\right)$ iterations after $T_1$ to make $\sigma_1(P_t)\leq\frac{\sigma_d}{4}$. Because $\varepsilon^2\leq\sigma_d$, we have the total number of iteration $T_0:=T_1+T_2 = O\left(\frac{1}{\eta\sigma_d}\ln\frac{d\sigma_d}{\varepsilon^2}\right)$.

\subsubsection{Dynamics on \textit{B}}
To verify the assumption about $\|B\|_F$ made in subsection \ref{sect:assumptions}, we cannot simply use the equation \eqref{Stage1_dyB}, since the error term $\|(AB^\top-BA^\top)A\|_{op}$ is approximately $O(\sigma_1\|B\|_{op})$, which will perturb the analysis seriously. 
Inspired by the continuous case that $\dot{\|B\|_F^2}=2\left\langle B,\dot{B}\right\rangle = \text{Tr}(B^\top P B) - \frac{1}{2}\|Q\|_F^2\leq \text{Tr}(B^\top P B)$, where $Q=AB^\top-BA^\top$ if we assume $J=K=0$. In this inequality, we hide the term $(AB^\top-BA^\top)AB^\top$ in $-\frac{1}{2}\|Q\|_F^2$ and wipe it completely in our analysis.

Hence, for discrete case, we have the following inequality\footnote{Please see \eqref{full_eq_B1} for full steps for this inequality.},
\begin{eqnarray}
\|B_{t+1}\|_F^2-\|B_t\|_F^2&\leq& -2\eta\lambda_d(P_t)\|B_t\|_F^2 + \eta\|B_t^\top A_t\|_F \|K_t^\top K_t - J_t^\top J_t\|_F\notag\\
&& + \eta^2\|(\Sigma - A_tA_t^\top + B_tB_t^\top)B_t + (A_tB_t^\top - B_tA_t^\top)A_t\notag\\
&&-A_t\frac{K_t^\top K_t-J_t^\top J_t}{2} - B_t\frac{K_t^\top K_t+J_t^\top J_t}{2}\|_F^2\label{eq_B1}\\
&\leq& O(\eta e_b^2\varepsilon^2(m+n)d\kappa)\|B_t\|_F^2 + O(\eta\sqrt{\sigma_1}e_b (m+n)d^2\varepsilon^3),\notag
\end{eqnarray}
where the last equation is because we have chosen $\eta=O\left(\frac{\sigma_d\varepsilon^2}{d\sigma_1^3}\right)$ and $e_b^2\varepsilon^2 = O\left(\frac{\sigma_d}{(m+n)d\kappa}\right)$.

By some routine calculations in section \ref{sect:full_standardB}, we have that by \paraSelect{choosing} $\varepsilon=\tilde{O}\left(\frac{\sigma_d }{\sqrt{\sigma_1}e_b(m+n)}\right)$, it is appropriate to choose $e_b=2c$, so that $\|B_T\|_F^2\leq e_b^2 d^2\varepsilon^2$, and induction holds.

\subsection{Stage Two: Local Convergence Phase}
We have proved in theorem \ref{thm_main1} that the gradient descent achieved a pretty good point at $T_0$, i.e. $\|B_{T_0}\|_F\leq 2cd\varepsilon$ and $\sigma_1(P_{T_0})\leq \frac{\sigma_d}{4}$. In this subsection, we will prove that start from this point, the gradient descent will converge linearly to the global optimal point. Then theorem \ref{thm:main} follows.

We will prove inductively on the following conditions:
\begin{itemize}
    \item[(1)] $\|B\|_F=O(\frac{\sigma_d}{\sqrt{\sigma_1}})$;
    \item[(2)] $\Delta_{t}:= \|\Sigma - U_{T_0+t} V_{T_0+t}^\top\|_{op}\leq \left(1-\frac{\eta\sigma_d}{2}\right)^{t}\frac{2}{5}\sigma_d$;
    \item[(3)] $\sigma_d(U), \sigma_d(V)\geq\sqrt{\frac{\sigma_d}{2}}$.
\end{itemize}
Intuitively, the (1) guarantees the magnitude of asymmetry remains small; (2) guarantees that in the principal space, the error converges to $0$ with a geometric rate; and (3) guarantees the ``signal" in the principal space remains lower bounded. 
\paragraph{Proof Sketch.}
First of all, it is easy to prove linear convergence of $J$ and $K$ by using assumption (3). Now we can verify the assumptions inductively.

$(1)+(2)\Rightarrow (3)$: We can prove $\sigma_d(UV^\top)=\Theta(\sigma_d)$. Because $U-V$ is small, (3) follows by triangle inequality.

$(1)+(3)\Rightarrow (2)$: Consider continuous-time case, if we assume $J=K=0$, the time derivative of $\Sigma-UV^\top$ is $-(\Sigma-UV^\top)VV^\top - UU^\top(\Sigma-UV^\top)$. Hence the convergence rate is lower bounded by $\sigma_d(U)$ and $\sigma_d(V)$. Because the perturbation term $J$ and $K$ decreases exponentially, assumption (2) follows naturally.
We transform this intuition to the discrete-time case.

$(2)+(3)\Rightarrow (1)$: Again, we use \eqref{eq_B1} to show that the increasing rate of $\|B\|_F^2$ is bounded by $\Delta$. Because $\Delta$ decreases exponentially, $\|B\|_F^2$ cannot diverge to infinity, but increase by a $\text{poly}(m,n,\kappa)$ factor. Then by taking $\varepsilon$ sufficiently small can we verify the assumption (1).

The full proof is deferred to appendix.

\paragraph{Summary of Stage 2.}
To sum up, we have $\|\oriSig - \oriU_t\oriV_t^\top\|_F^2 = \|\Sigma - U_tV_t^\top\|_F^2 + \|U_tK_t^\top\|_F^2 + \|J_tV_t^\top\|_F^2 + \|J_tK_t^\top\|_F^2$, which can be further bounded by
\begin{eqnarray}
\|\oriSig - \oriU_{T_0+t}\oriV_{T_0+t}^\top\|_F^2&\leq&\left(1-\frac{\eta\sigma_d}{2}\right)^{t}\frac{2}{5}\sigma_d + 2(c^2+c^4)\varepsilon^2\sigma_1(m+n)d\left(1-\frac{\eta\sigma_d}{2}\right)^{2t}\notag\\
&\leq& C\left(1-\frac{\eta\sigma_d}{2}\right)^{t}\sigma_d,\notag
\end{eqnarray}
for some universal constant $C$. Hence one only needs $T_f:=O\left(\frac{\ln\frac{\sigma_d}{\delta}}{\eta\sigma_d}\right)$ iterations after $T_0$ to achieve an $\delta$-optimal point.


\section{Conclusion}
This paper proved that randomly initialized gradient descent converges to a global minimum of the asymmetric low-rank matrix factorization problem with a polynomial convergence rate.
This result explains the empirical phenomena observed in prior work, and confirms that gradient descent with a constant learning rate still enjoys the auto-balancing property as argued in \cite{du2018algorithmic}.

We believe our requirement of the step size $\eta$ is loose and a tighter analysis may improve the running time of gradient descent.
Another interesting direction is to apply our techniques to other related problems such as asymmetric matrix sensing, asymmetric matrix completion and linear neural networks.
\medskip

\bibliographystyle{plainnat}
\bibliography{simonduref}

\begin{thebibliography}{24}
\providecommand{\natexlab}[1]{#1}
\providecommand{\url}[1]{\texttt{#1}}
\expandafter\ifx\csname urlstyle\endcsname\relax
  \providecommand{\doi}[1]{doi: #1}\else
  \providecommand{\doi}{doi: \begingroup \urlstyle{rm}\Url}\fi

\bibitem[Bhojanapalli et~al.(2016)Bhojanapalli, Kyrillidis, and
  Sanghavi]{bhojanapalli2016dropping}
Srinadh Bhojanapalli, Anastasios Kyrillidis, and Sujay Sanghavi.
\newblock Dropping convexity for faster semi-definite optimization.
\newblock In \emph{Conference on Learning Theory}, pages 530--582. PMLR, 2016.

\bibitem[Chen et~al.(2019)Chen, Chi, Fan, and Ma]{chen2019gradient}
Yuxin Chen, Yuejie Chi, Jianqing Fan, and Cong Ma.
\newblock Gradient descent with random initialization: Fast global convergence
  for nonconvex phase retrieval.
\newblock \emph{Mathematical Programming}, 176\penalty0 (1):\penalty0 5--37,
  2019.

\bibitem[Cifuentes and Moitra(2019)]{cifuentes2019polynomial}
Diego Cifuentes and Ankur Moitra.
\newblock Polynomial time guarantees for the burer-monteiro method.
\newblock \emph{arXiv preprint arXiv:1912.01745}, 2019.

\bibitem[Du et~al.(2017)Du, Jin, Lee, Jordan, Poczos, and
  Singh]{du2017gradient}
Simon~S Du, Chi Jin, Jason~D Lee, Michael~I Jordan, Barnabas Poczos, and Aarti
  Singh.
\newblock Gradient descent can take exponential time to escape saddle points.
\newblock \emph{arXiv preprint arXiv:1705.10412}, 2017.

\bibitem[Du et~al.(2018)Du, Hu, and Lee]{du2018algorithmic}
Simon~S Du, Wei Hu, and Jason~D Lee.
\newblock Algorithmic regularization in learning deep homogeneous models:
  Layers are automatically balanced.
\newblock In \emph{Advances in Neural Information Processing Systems}, pages
  384--395, 2018.

\bibitem[Ge et~al.(2015)Ge, Huang, Jin, and Yuan]{ge2015escaping}
Rong Ge, Furong Huang, Chi Jin, and Yang Yuan.
\newblock Escaping from saddle points $-$ online stochastic gradient for tensor
  decomposition.
\newblock In \emph{Proceedings of The 28th Conference on Learning Theory},
  pages 797--842, 2015.

\bibitem[Ge et~al.(2016)Ge, Lee, and Ma]{ge2016matrix}
Rong Ge, Jason~D Lee, and Tengyu Ma.
\newblock Matrix completion has no spurious local minimum.
\newblock In \emph{Advances in Neural Information Processing Systems}, pages
  2973--2981, 2016.

\bibitem[Ge et~al.(2017{\natexlab{a}})Ge, Jin, and Zheng]{ge2017no}
Rong Ge, Chi Jin, and Yi~Zheng.
\newblock No spurious local minima in nonconvex low rank problems: A unified
  geometric analysis.
\newblock In \emph{Proceedings of the 34th International Conference on Machine
  Learning}, pages 1233--1242, 2017{\natexlab{a}}.

\bibitem[Ge et~al.(2017{\natexlab{b}})Ge, Lee, and Ma]{ge2017learning}
Rong Ge, Jason~D Lee, and Tengyu Ma.
\newblock Learning one-hidden-layer neural networks with landscape design.
\newblock \emph{arXiv preprint arXiv:1711.00501}, 2017{\natexlab{b}}.

\bibitem[Jain et~al.(2017)Jain, Jin, Kakade, and Netrapalli]{jain2017global}
Prateek Jain, Chi Jin, Sham Kakade, and Praneeth Netrapalli.
\newblock Global convergence of non-convex gradient descent for computing
  matrix squareroot.
\newblock In \emph{Artificial Intelligence and Statistics}, pages 479--488,
  2017.

\bibitem[Jin et~al.(2017)Jin, Ge, Netrapalli, Kakade, and
  Jordan]{jin2017escape}
Chi Jin, Rong Ge, Praneeth Netrapalli, Sham~M. Kakade, and Michael~I. Jordan.
\newblock How to escape saddle points efficiently.
\newblock In \emph{Proceedings of the 34th International Conference on Machine
  Learning}, pages 1724--1732, 2017.

\bibitem[Lax(2007)]{lax2007linear}
P.D. Lax.
\newblock \emph{Linear Algebra and Its Applications}.
\newblock Pure and Applied Mathematics: A Wiley Series of Texts, Monographs and
  Tracts. Wiley, 2007.
\newblock ISBN 9780471751564.
\newblock URL \url{https://books.google.com/books?id=e7FJM6aqZD8C}.

\bibitem[Lee et~al.(2016)Lee, Simchowitz, Jordan, and Recht]{lee2016gradient}
Jason~D Lee, Max Simchowitz, Michael~I Jordan, and Benjamin Recht.
\newblock Gradient descent only converges to minimizers.
\newblock In \emph{Conference on Learning Theory}, pages 1246--1257, 2016.

\bibitem[Li et~al.(2019{\natexlab{a}})Li, Zhu, and Tang]{li2019non}
Qiuwei Li, Zhihui Zhu, and Gongguo Tang.
\newblock The non-convex geometry of low-rank matrix optimization.
\newblock \emph{Information and Inference: A Journal of the IMA}, 8\penalty0
  (1):\penalty0 51--96, 2019{\natexlab{a}}.

\bibitem[Li et~al.(2019{\natexlab{b}})Li, Lu, Arora, Haupt, Liu, Wang, and
  Zhao]{li2019symmetry}
Xingguo Li, Junwei Lu, Raman Arora, Jarvis Haupt, Han Liu, Zhaoran Wang, and
  Tuo Zhao.
\newblock Symmetry, saddle points, and global optimization landscape of
  nonconvex matrix factorization.
\newblock \emph{IEEE Transactions on Information Theory}, 65\penalty0
  (6):\penalty0 3489--3514, 2019{\natexlab{b}}.

\bibitem[Li et~al.(2018)Li, Ma, and Zhang]{li2018algorithmic}
Yuanzhi Li, Tengyu Ma, and Hongyang Zhang.
\newblock Algorithmic regularization in over-parameterized matrix sensing and
  neural networks with quadratic activations.
\newblock In \emph{Conference On Learning Theory}, pages 2--47. PMLR, 2018.

\bibitem[Ma et~al.(2021)Ma, Li, and Chi]{ma2021beyond}
Cong Ma, Yuanxin Li, and Yuejie Chi.
\newblock Beyond procrustes: Balancing-free gradient descent for asymmetric
  low-rank matrix sensing.
\newblock \emph{IEEE Transactions on Signal Processing}, 69:\penalty0 867--877,
  2021.

\bibitem[Panageas and Piliouras(2016)]{panageas2016gradient}
Ioannis Panageas and Georgios Piliouras.
\newblock Gradient descent only converges to minimizers: Non-isolated critical
  points and invariant regions.
\newblock \emph{arXiv preprint arXiv:1605.00405}, 2016.

\bibitem[Park et~al.(2017)Park, Kyrillidis, Carmanis, and
  Sanghavi]{park2017non}
Dohyung Park, Anastasios Kyrillidis, Constantine Carmanis, and Sujay Sanghavi.
\newblock Non-square matrix sensing without spurious local minima via the
  {B}urer-{M}onteiro approach.
\newblock In \emph{Artificial Intelligence and Statistics}, pages 65--74, 2017.

\bibitem[Sun and Luo(2016)]{sun2016guaranteed}
Ruoyu Sun and Zhi-Quan Luo.
\newblock Guaranteed matrix completion via non-convex factorization.
\newblock \emph{IEEE Transactions on Information Theory}, 62\penalty0
  (11):\penalty0 6535--6579, 2016.

\bibitem[Tao(2012)]{tao2012topics}
Terence Tao.
\newblock \emph{Topics in random matrix theory}, volume 132.
\newblock American Mathematical Soc., 2012.

\bibitem[Tu et~al.(2016)Tu, Boczar, Simchowitz, Soltanolkotabi, and
  Recht]{tu2016low}
Stephen Tu, Ross Boczar, Max Simchowitz, Mahdi Soltanolkotabi, and Benjamin
  Recht.
\newblock Low-rank solutions of linear matrix equations via {P}rocrustes flow.
\newblock In \emph{Proceedings of the 33rd International Conference on
  International Conference on Machine Learning-Volume 48}, pages 964--973.
  JMLR. org, 2016.

\bibitem[Zhao et~al.(2015)Zhao, Wang, and Liu]{zhao2015nonconvex}
Tuo Zhao, Zhaoran Wang, and Han Liu.
\newblock Nonconvex low rank matrix factorization via inexact first order
  oracle.
\newblock \emph{Advances in Neural Information Processing Systems}, 2015.

\bibitem[Zheng and Lafferty(2016)]{zheng2016convergence}
Qinqing Zheng and John Lafferty.
\newblock Convergence analysis for rectangular matrix completion using
  burer-monteiro factorization and gradient descent.
\newblock \emph{arXiv preprint arXiv:1605.07051}, 2016.

\end{thebibliography}

\newpage
\appendix


\section{Omitted Derivations of Formulas}

We have omitted a number of complicated formulas in the main text to provide clear intuition and concise proof sketch. We will list all mentioned formulas here for readers' reference.

\begin{eqnarray}
\sigma_d(A_{t+1}) &\geq& \sigma_d(A_t + \eta(\Sigma-A_t A_t^\top) A_t) - \eta\left(3\sqrt{2\sigma_1}\|B_t\|_{op}^2 + \sqrt{2\sigma_1}(\|K_t\|_{op}^2 + \|J_t\|_{op}^2)\right)\notag\\
&\geq& \sqrt{\sigma_d(\overline{S}_{t+1})} - \eta\left(3\sqrt{2\sigma_1}e_b^2\varepsilon^2 d^2 + \sqrt{2\sigma_1}c^2\varepsilon^2(m+n)\right)\notag\\
&\geq& \sqrt{(1+\eta(\sigma_d - \sigma_d(A_t)^2))^2 \sigma_d(A_t)^2 - 22\sigma_1^3\eta^2}\notag\\
&& - 1.5\sqrt{2\sigma_1}\eta(e_b^2 + c^2)\varepsilon^2 (m+n)d.\label{full_eq_standardA}
\end{eqnarray}

\begin{eqnarray}
\|C_t\|_{op}&\leq& 2\sqrt{2\sigma_1}e_b^2 d^2\varepsilon^2 + \sqrt{2\sigma_1}c^2\varepsilon^2\left(\max\{d, m'\}+\max\{d, n'\}\right)\notag\\
&\leq&\sqrt{2\sigma_1}(e_b^2 + c^2) (m+n)d\varepsilon^2;\notag\\
\|D_t\|_{op}&\leq& 4\sigma_1 e_b d\varepsilon + \sqrt{2\sigma_1}c^2\varepsilon^2\left(\max\{d, m'\}+\max\{d, n'\}\right)\notag\\
&\leq& 8\sigma_1 e_b d \varepsilon.\label{full_eq_CD}
\end{eqnarray}

\begin{eqnarray}
\|B_{t+1}\|_F^2-\|B_t\|_F^2&=&-2\eta\left\langle B_tB_t^\top, \Sigma - A_tA_t^\top + B_tB_t^\top + \frac{K_t^\top K_t+J_t^\top J_t}{2}\right\rangle\notag\\
&&- \eta\|A_tB_t^\top - B_tA_t^\top\|_F^2 + \eta\left\langle B_t^\top A_t, K_t^\top K_t-J_t^\top J_t\right\rangle\notag\\
&&+ \eta^2\|(\Sigma - A_tA_t^\top + B_tB_t^\top)B_t + (A_tB_t^\top - B_tA_t^\top)A_t\notag\\
&&-A_t\frac{K_t^\top K_t-J_t^\top J_t}{2} - B_t\frac{K_t^\top K_t+J_t^\top J_t}{2}\|_F^2\notag\\
&\leq& -2\eta\lambda_d(P_t)\|B_t\|_F^2 + \eta\|B_t^\top A_t\|_F \|K_t^\top K_t - J_t^\top J_t\|_F\notag\\
&& + \eta^2\|(\Sigma - A_tA_t^\top + B_tB_t^\top)B_t + (A_tB_t^\top - B_tA_t^\top)A_t\notag\\
&&-A_t\frac{K_t^\top K_t-J_t^\top J_t}{2} - B_t\frac{K_t^\top K_t+J_t^\top J_t}{2}\|_F^2\notag\\
&\leq& O(\eta e_b^2\varepsilon^2(m+n)d\kappa)\|B_t\|_F^2 + O(\eta\sqrt{\sigma_1}e_b (m+n)d^2\varepsilon^3)\notag\\
&& + O(\eta^2\sigma_1^2e_b^2d^2\varepsilon^2).\notag\\
&=&O(\eta e_b^2\varepsilon^2(m+n)d\kappa)\|B_t\|_F^2 + O(\eta\sqrt{\sigma_1}e_b (m+n)d^2\varepsilon^3).\label{full_eq_B1}
\end{eqnarray}

\begin{eqnarray}
\Sigma - U_{t+1+T_0}V_{t+1+T_0}^\top &=& (I - \eta U_{t+T_0}U_{t+T_0}^\top) (\Sigma - U_{t+T_0}V_{t+T_0}^\top ) (I - \eta V_{t+T_0}V_{t+T_0}^\top)\notag\\
&& - \eta^2 U_{t+T_0}U_{t+T_0}^\top(\Sigma - U_{t+T_0}V_{t+T_0}^\top ) V_{t+T_0}V_{t+T_0}^\top\notag\\
&& - \eta^2(\Sigma - U_{t+T_0}V_{t+T_0}^\top )V_{t+T_0}U_{t+T_0}^\top(\Sigma - U_{t+T_0}V_{t+T_0}^\top )\notag\\
&& + \eta(U_{t+T_0} + \eta(\Sigma - U_{t+T_0} V_{t+T_0}^\top)V_{t+T_0}) J_{t+T_0}^\top J_{t+T_0} V_{t+T_0}^\top\notag\\
&& + \eta U_{t+T_0} K_{t+T_0}^\top K_{t+T_0}(V_{t+T_0} + \eta(\Sigma - U_{t+T_0} V_{t+T_0}^\top)^\top U_{t+T_0})^\top\notag\\
&& - \eta^2 U_{t+T_0} K_{t+T_0}^\top K_{t+T_0} J_{t+T_0}^\top J_{t+T_0} V_{t+T_0}^\top.\label{full_eq_Delta}
\end{eqnarray}

\section{Dynamics in the Symmetric and Full-Rank Case}
\label{sec:dynamics_full_rank}

We consider the case where $U=V=A$ and $\Sigma$ is symmetric and full-rank, and we use gradient flow.
We can derive the dynamics of $S=AA^\top$ as
 $\dot{S}:=(\Sigma - S)S + S(\Sigma - S)$, which is a quadratic ordinary differential equation and it is hard to solve directly.

However, if we define $\Sinv:= S^{-1}$, we have $S\Sinv\equiv I$. Taking the derivative implies $\dot{S}\Sinv + S\dot{\Sinv} = 0$. Hence, $\dot{\Sinv} = -S^{-1} \dot{S} S^{-1}$. Substitute $\dot{S}=(\Sigma - S)S + S(\Sigma - S)$ in it, we have
$$
\dot{\Sinv} = - S^{-1}\left((\Sigma - S)S + S(\Sigma - S)\right) S^{-1} = -\Sinv\Sigma - \Sigma \Sinv + 2I,
$$
which is a \emph{linear} ordinary differential equation.

For simplicity, define $\Sinvplus:= \Sinv - \Sigma^{-1}$. Then
\begin{eqnarray}
\dot{\Sinvplus} = - \Sinvplus\Sigma - \Sigma \Sinvplus.
\end{eqnarray}

Solving this equation and we have
\begin{eqnarray}
\Sinvplus(t) = e^{-t\Sigma} \Sinvplus_0 e^{-t\Sigma}.
\end{eqnarray}

Finally, we could conclude that
\begin{eqnarray}
S(t) = \left(e^{-t\Sigma} (S_0^{-1} - \Sigma^{-1}) e^{-t\Sigma} + \Sigma^{-1}\right)^{-1}.
\end{eqnarray}

Similarly, because $P$'s dynamic is $\dot{P}=-(\Sigma-P)P-P(\Sigma-P)$, we have
\begin{eqnarray}
P(t) = \left(e^{t\Sigma} (P_0^{-1} - \Sigma^{-1}) e^{t\Sigma} + \Sigma^{-1}\right)^{-1},
\end{eqnarray}
where $P_0:=\Sigma-S_0$.

And it is interesting to verify that $S(t)+P(t)\equiv \Sigma$ by using the following lemma.

\begin{lemma}\label{lemm:magical equation}
Suppose $S,P,E\in\mathbb{R}^{d\times d}$ are three positive definite matrices. $\Sigma = S+P$. Suppose $E$ commutes with $\Sigma$. Then
\begin{eqnarray}
\left(E(S^{-1}-\Sigma^{-1})E+\Sigma^{-1}\right)^{-1} + \left(E^{-1}(P^{-1}-\Sigma^{-1})E^{-1}+\Sigma^{-1}\right)^{-1} = \Sigma.\notag
\end{eqnarray}
\end{lemma}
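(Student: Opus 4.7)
The plan is to simplify both bracketed expressions using the identity $S^{-1} - \Sigma^{-1} = S^{-1}(\Sigma - S)\Sigma^{-1} = S^{-1} P \Sigma^{-1}$, and its analogue $P^{-1} - \Sigma^{-1} = P^{-1} S \Sigma^{-1}$. Since $E$ commutes with $\Sigma$ it commutes with $\Sigma^{-1}$, so I can rewrite
\[E(S^{-1}-\Sigma^{-1})E + \Sigma^{-1} = E S^{-1} P E \,\Sigma^{-1} + \Sigma^{-1} = (E S^{-1} P E + I)\,\Sigma^{-1},\]
and by the same token
\[E^{-1}(P^{-1}-\Sigma^{-1})E^{-1} + \Sigma^{-1} = (E^{-1} P^{-1} S E^{-1} + I)\,\Sigma^{-1}.\]

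Setting $M := E S^{-1} P E$, I note that $M$ is invertible (being a product of invertible matrices) and that its inverse is precisely $M^{-1} = E^{-1} P^{-1} S E^{-1}$, which is exactly the matrix appearing in the second factor. Inverting the two expressions above and adding, the claim reduces to the purely algebraic identity
\[(M+I)^{-1} + (M^{-1}+I)^{-1} = I.\]
This in turn follows from the factorization $M^{-1} + I = M^{-1}(I + M)$, which gives $(M^{-1}+I)^{-1} = (I+M)^{-1} M$, so the left-hand side equals $(M+I)^{-1}(I+M) = I$.

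The only things to check are therefore the invertibility of all matrices involved and the legitimacy of the commutativity-based regrouping, both of which are immediate: positive definiteness of $S$, $P$, and $E$ makes every inversion legal, and the hypothesis $[E,\Sigma]=0$ lets me slide $E$ and $E^{-1}$ past $\Sigma^{-1}$ to pull out the common right factor. I do not expect any genuine technical obstacle; the content of the lemma is really the one-line identity $(M+I)^{-1}+(M^{-1}+I)^{-1}=I$ dressed up via the change of variable $M = E S^{-1} P E$. Note that positive definiteness is used only to guarantee invertibility of $S$, $P$, $E$ (and hence of $M$); the lemma and its proof extend verbatim to any invertible $S$, $P$, $E$ with $E$ commuting with $\Sigma = S + P$.
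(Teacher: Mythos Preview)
Your proof is correct and follows essentially the same route as the paper: both arguments use $E\Sigma=\Sigma E$ to pull out a factor of $\Sigma^{-1}$ (you factor it inside each bracket, the paper right-multiplies the whole identity by $\Sigma^{-1}$), then reduce everything to the one-line identity $(X+I)^{-1}+(X^{-1}+I)^{-1}=I$. The only cosmetic difference is that your auxiliary matrix is $M=ES^{-1}PE$ while the paper uses $Z=EPS^{-1}E$; the structure and the key step are identical.
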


\section{Proof of Lemmas}
\begin{proof}[Proof of lemma \ref{lemm:magical equation}]
Since $\Sigma$ is invertible, we only need to verify the equation after right multiplying both side by $\Sigma^{-1}$. We have
\begin{eqnarray}
&&\left(E(S^{-1}-\Sigma^{-1})E+\Sigma^{-1}\right)^{-1}\Sigma^{-1} + \left(E^{-1}(P^{-1}-\Sigma^{-1})E^{-1}+\Sigma^{-1}\right)^{-1} \Sigma^{-1}\notag\\
&=& \left(E(\Sigma S^{-1}-I)E+I\right)^{-1} + \left(E^{-1}(\Sigma P^{-1}-I)E^{-1}+I\right)^{-1}\label{magic_step1}\\
&=& \left(E(P S^{-1})E+I\right)^{-1} + \left(E^{-1}(S P^{-1})E^{-1}+I\right)^{-1}\label{magic_step2}\\
&=& (Z + I)^{-1} + (Z^{-1} + I)^{-1} \qquad \text{(we denote $E(P S^{-1})E$ by $Z$ here)}\label{magic_step3}\\
&=& (Z+I)^{-1} + Z (Z+I)^{-1}\notag\\
&=& I\notag\\
&=& \Sigma \Sigma^{-1},\notag
\end{eqnarray}
where \eqref{magic_step1} is because $\Sigma$ commutes with $E$, \eqref{magic_step2} is because $\Sigma = S + P$ and finally \eqref{magic_step3} is because $\left(E(P S^{-1})E\right)^{-1} = E^{-1}(S P^{-1})E^{-1}$.
\end{proof}
\paragraph{General analysis for lemma \ref{lemm:dynamic S} and \ref{lemm:dynamic P}}
Suppose $\bar{S}, \tilde{S}$ and $\Sigma$ are three symmetric matrices. Define $D=\bar{S}-\tilde{S}$. Then we have equation
\begin{eqnarray}
&&(I+\eta(\Sigma-\bar{S}))\bar{S}(I+\eta(\Sigma-\bar{S})) - (I+\eta(\Sigma-\tilde{S}))\tilde{S}(I+\eta(\Sigma-\tilde{S}))\notag\\
&=& \bar{S}-\tilde{S} + \eta\left((\Sigma-\bar{S})\bar{S}+\bar{S}(\Sigma-\bar{S})-(\Sigma-\tilde{S})\tilde{S}+\tilde{S}(\Sigma-\tilde{S})\right)\notag\\
&& + \eta^2\left((\Sigma-\bar{S})\bar{S}(\Sigma-\bar{S}) - (\Sigma-\tilde{S})\tilde{S}(\Sigma-\tilde{S})\right)\notag\\
&=& D + \eta((\Sigma-\bar{S}-\tilde{S})D + D(\Sigma-\bar{S}-\tilde{S}))\notag\\
&& + \eta^2\left((\Sigma-\bar{S})\bar{S}(\Sigma-\bar{S}) - (\Sigma-\tilde{S})\tilde{S}(\Sigma-\tilde{S})\right)\notag\\
&=& \left(I + \eta (\Sigma-\bar{S}-\tilde{S})\right)D\left(I + \eta (\Sigma-\bar{S}-\tilde{S})\right)\notag\\
&& + \eta^2\left((\Sigma-\bar{S}-\tilde{S})D(\Sigma-\bar{S}-\tilde{S})+(\Sigma-\bar{S})\bar{S}(\Sigma-\bar{S}) - (\Sigma-\tilde{S})\tilde{S}(\Sigma-\tilde{S})\right).\label{eq_general}
\end{eqnarray}
\begin{proof}[Proof of lemma \ref{lemm:dynamic S}]
First of all, we can expand the expression of $S'$ and split it in the following terms.
\begin{eqnarray}
\sigma_d(S')&\geq&\lambda_{d}\left(\beta S - 2\eta S^2 + \eta^2 S^3\right) + \sigma_d\left((1-\beta)S + \eta\Sigma S + \eta S\Sigma + \frac{\eta^2}{1-\beta}\Sigma S\Sigma\right)\notag\\
&& + \eta^2\lambda_d\left(-\frac{\beta}{1-\beta}\Sigma S\Sigma - \Sigma SS - SS\Sigma\right).\notag
\end{eqnarray}

For the first term $\beta S - 2\eta S^2 + \eta^2 S^3$, its eigenvalues are $\beta s_i - 2\eta s_i^2 + \eta^2 s_i^3$ since $S$ is commutable with itself, where $s_i$ is the $i^{\text{th}}$ largest singular value of $S$. By the assumptions $s_i\leq 2\sigma_1$ and $\eta\leq\frac{\beta}{8\sigma_1}$, we see the smallest eigenvalue of $\beta S - 2\eta S^2 + \eta^2 S^3$ is exactly $\beta s - 2\eta s^2 + \eta^2 s^3$.

For the second term, it can be rewritten as
$$(1-\beta)S + \eta\Sigma S + \eta S\Sigma + \frac{\eta^2}{1-\beta}\Sigma S\Sigma \equiv \left(\sqrt{1-\beta}I+\frac{\eta}{\sqrt{1-\beta}}\Sigma\right)S\left(\sqrt{1-\beta}I+\frac{\eta}{\sqrt{1-\beta}}\Sigma\right).$$
Hence, the minimal singular value can be bounded by $\left(\sqrt{1-\beta}+\frac{\eta\sigma_d}{\sqrt{1-\beta}}\right)^2s$.

Finally, the last term can be lower bounded by $-\eta^2\sigma_1\left(-\frac{\beta}{1-\beta}\Sigma S\Sigma - \Sigma SS - SS\Sigma\right)\geq -\frac{8+6\beta}{1-\beta}\eta^2\sigma_1^3$. Summing up all three terms and we get
\begin{eqnarray}
s'&\geq&\left(\beta s - 2\eta s^2 + \eta^2 s^3\right) + \left(\sqrt{1-\beta}+\frac{\eta\sigma_d}{\sqrt{1-\beta}}\right)^2s - \frac{8+6\beta}{1-\beta}\eta^2\sigma_1^3\notag\\
&=& (1+\eta(\sigma_d - s))^2s + \frac{\beta\sigma_d^2}{1-\beta}\eta^2 s + 2\sigma_d\eta^2 s^2 - \frac{8+6\beta}{1-\beta}\sigma_1^3\eta^2\notag\\
&\geq& (1+\eta(\sigma_d - s))^2s - \frac{8+6\beta}{1-\beta}\sigma_1^3\eta^2.\notag
\end{eqnarray}
\end{proof}

\textbf{Remark:} 
If we choose $\bar{S}=S$ and $\tilde{S}=\sigma_d(S)I$ in equation \eqref{eq_general}, we know $D=\bar{S}-\tilde{S}\succeq 0$. Hence $\sigma_d(S')\geq (1+\eta(\sigma_d - s))^2s - O(\sigma_1^3\eta^2)$.
\begin{proof}[Proof of lemma \ref{lemm:dynamic P}]
If $p\geq 0$, it suggests that $P$ is positive semi-definite, and $P'$ is positive semi-definite, too. Hence $p'\geq 0$ if $p\geq 0$.

If $p\leq 0$, we can expand the expression of $P'$ and split it in the following terms.
\begin{eqnarray}
\lambda_d(P')&\geq&\lambda_{d}\left(\beta P + 2\eta P^2 + \eta^2 P^3\right) + \lambda_d\left((1-\beta)P - \eta\Sigma P - \eta P\Sigma + \frac{\eta^2}{1-\beta}\Sigma P\Sigma\right)\notag\\
&& + \eta^2\lambda_d\left(-\frac{\beta}{1-\beta}\Sigma P\Sigma - \Sigma PP - PP\Sigma\right).\notag
\end{eqnarray}

For the first term $\beta P + 2\eta P^2 + \eta^2 P^3$, its eigenvalues are $\beta p_i + 2\eta p_i^2 + \eta^2 p_i^3$ since $P$ is commutable with itself, where $p_i$ is the $i^{\text{th}}$ largest eigenvalue of $P$. By the assumptions $|p_i|\leq 2\sigma_1$ and $\eta\leq\frac{\beta}{8\sigma_1}$, we see the smallest eigenvalue of $\beta P + 2\eta P^2 + \eta^2 P^3$ is exactly $\beta p + 2\eta p^2 + \eta^2 p^3$.

For the second term, it can be rewritten as
$$(1-\beta)P - \eta\Sigma P - \eta P\Sigma + \frac{\eta^2}{1-\beta}\Sigma P\Sigma \equiv \left(\sqrt{1-\beta}I-\frac{\eta}{\sqrt{1-\beta}}\Sigma\right)P\left(\sqrt{1-\beta}I-\frac{\eta}{\sqrt{1-\beta}}\Sigma\right).$$
Hence, the minimal eigenvalue can be bounded by $\left(\sqrt{1-\beta}-\frac{\eta\sigma_d}{\sqrt{1-\beta}}\right)^2p$ if $p\leq0$.

Finally, the last term can be lower bounded by $-\eta^2\sigma_1\left(-\frac{\beta}{1-\beta}\Sigma P\Sigma - \Sigma PP - PP\Sigma\right)\geq -\frac{8+6\beta}{1-\beta}\eta^2\sigma_1^3$. Summing up all three terms and we get that when $p\leq0$,
\begin{eqnarray}
p'&\geq&\left(\beta p + 2\eta p^2 + \eta^2 p^3\right) + \left(\sqrt{1-\beta}-\frac{\eta\sigma_d}{\sqrt{1-\beta}}\right)^2p - \frac{8+6\beta}{1-\beta}\eta^2\sigma_1^3\notag\\
&=& (1-\eta(\sigma_d - p))^2p + \frac{\beta\sigma_d^2}{1-\beta}\eta^2 p + 2\sigma_d\eta^2 p^2 - \frac{8+6\beta}{1-\beta}\sigma_1^3\eta^2\notag\\
&\geq& (1-\eta(\sigma_d - p))^2p - \frac{8+6\beta}{1-\beta}\sigma_1^3\eta^2\notag\\
&\geq& (1-\eta\sigma_d)^2p - \frac{8+6\beta}{1-\beta}\sigma_1^3\eta^2.\notag
\end{eqnarray}
\end{proof}

\textbf{Remark:}
Similarly, if we choose $\bar{S}=P$ and $\tilde{S}=\lambda_d(P)I$ in equation \eqref{eq_general}, we have $D=P-\lambda_d(P)I\succeq 0$. Hence we have $\lambda_d(P')\geq \min\left\{0, (1-\eta\sigma_d)^2p + O(\sigma_1^3\eta^2)\right\}$.

\section{Solving the Iteration Formula of \textit{a}}\label{sect:full_standardA}
In this section we analyze the iteration formula \eqref{eq_standardA}.

We first consider the case when $a_t\leq\sqrt{\frac{\sigma_d}{2}}$. Notice that $a_t\geq\frac{\varepsilon}{c\sqrt{d}}$, we have
$$
\sqrt{(1+\eta(\sigma_d - \sigma_d(A_t)^2))^2 \sigma_d(A_t)^2 - 22\sigma_1^3\eta^2}\geq (1+\eta(\sigma_d - \sigma_d(A_t)^2)) \sigma_d(A_t) - 22\frac{c\sqrt{d}}{\varepsilon}\sigma_1^3\eta^2,
$$
where we \paraSelect{choose} $\eta$ so small that $22\sigma_1^3\eta^2\leq\frac{\varepsilon^2}{c^2d}$.

By \paraSelect{taking} $\varepsilon = O\left(\frac{\sigma_d}{\sqrt{d^{3}\sigma_1}e_b^2(m+n)}\right)$ and $\eta=O\left(\frac{\sigma_d\varepsilon^2}{d\sigma_1^3}\right)$, we have $\frac{1}{2}\eta(\sigma_d-a_t^2)a_t\geq \frac{1}{2}\eta\frac{\sigma_d}{2}\frac{\varepsilon}{c\sqrt{d}} \geq 22\frac{c\sqrt{d}}{\varepsilon}\sigma_1^3\eta^2 + 1.5\sqrt{2\sigma_1}\eta(e_b^2 + c^2)\varepsilon^2 (m+n)d$, hence,
\begin{eqnarray}
a_{t+1}\geq \left(1 + \frac{\eta}{2}(\sigma_d - a_t^2)\right)a_t,
\end{eqnarray}
and
\begin{eqnarray}
s_{t+1}\geq \left(1 + \frac{\eta}{2}(\sigma_d - s_t)\right)^2 s_t \geq \left(1 + \eta(\sigma_d - s_t)\right) s_t.\label{eq_no1}
\end{eqnarray}

Subtracting $\sigma_d$ by \eqref{eq_no1}, we have
\begin{eqnarray}
\sigma_d - s_{t+1} \leq (1-\eta s_t)(\sigma_d - s_t).\label{eq_no2}
\end{eqnarray}

Dividing \eqref{eq_no1} by \eqref{eq_no2} we have
\begin{eqnarray}
\frac{s_{t+1}}{\sigma_d - s_{t+1}} \geq \frac{1 + \eta(\sigma_d - s_t)}{1-\eta s_t}\frac{s_t}{\sigma_d - s_t}\geq (1+\eta\sigma_d)\frac{s_t}{\sigma_d - s_t}.\notag
\end{eqnarray}

Hence, $\frac{s_T}{\sigma_d - s_T}\geq (1+\eta \sigma_d)^T\frac{s_0}{\sigma_d}$. So, it takes at most $T_1:=O\left(\frac{1}{\eta\sigma_d}\ln\frac{d\sigma_d}{\varepsilon^2}\right)$ iterations to bring $a_t$ to at least $\sqrt{\frac{\sigma_d}{2}}$.
\section{Solving the Iteration Formula on \textit{B}}\label{sect:full_standardB}

The iteration formula can be summarized as
$$
\|B_{t+1}\|_F^2\leq (1+p)\|B_t\|_F^2 + q,
$$
where $p=O(\eta e_b^2\varepsilon^2(m+n)d\kappa)$ and $q = O(\eta\sqrt{\sigma_1}e_b (m+n)d^2\varepsilon^3)$. Moreover, we have
$$
\|B_T\|_F^2\leq (1+p)^T\|B_0\|_F^2 + ((1+p)^T-1)\frac{q}{p}.
$$

Suppose $T\leq T_0=O\left(\frac{1}{\eta\sigma_d}\ln\frac{d\sigma_d}{\varepsilon^2}\right)$. By \paraSelect{choosing} $\varepsilon=\tilde{O}\left(\frac{\sqrt{\sigma_d}}{e_b\sqrt{(m+n)d\kappa}}\right)$\footnote{Here $\tilde{O}$ means there might be some $\log$ terms about $m, n, \kappa$ and $e_b$ on the denominator.}, we have $pT\leq pT_0\leq 1$. Then $(1+p)^T=1 + \binom{T}{1}p + \binom{T}{2}p^2 + \cdots + \binom{T}{T}p^T\leq 1 + Tp\left(1 + \frac{1}{2!} + \cdots + \frac{1}{T!}\right)\leq 1 + (e-1)Tp \leq 1 + 2pT$. Hence,
$$
\|B_T\|_F^2\leq (1+2pT)\|B_0\|_F^2 + 2qT.
$$

Similarly, by \paraSelect{choosing} $\varepsilon=\tilde{O}\left(\frac{\sigma_d }{\sqrt{\sigma_1}e_b(m+n)}\right)$, we have $qT\leq c^2d^2\varepsilon^2$. By taking $e_b = 2c$, we have
$$
\|B_T\|_F^2\leq 3\|B_0\|_F^2 + c^2d^2\varepsilon^2\leq 4c^2d^2\varepsilon^2 = e_b^2 d^2\varepsilon^2,
$$
induction succeeds.
\section{Proof of Stage Two}

Here is the full version of the proof. Initially, $\|\Delta_0\|_{op} = \|P_{T_0} + Q_{T_0}\|_{op}$ where $Q=AB^\top - BA^\top$. Hence $\|\Delta_0\|_{op} \leq \sigma_1(P_{T_0}) + \sigma_1(Q_{T_0}) \leq \frac{\sigma_d}{4} + \sqrt{2\sigma_1}\sigma_1(B_{T_0}) \leq \frac{\sigma_d}{3}$. Then for $U_{T_0}$ we have $\frac{2\sigma_d}{3}\leq\sigma_d(\Sigma) - \sigma_1(\Delta_0)\leq \sigma_d(U_{T_0}V_{T_0}^\top)\leq \sigma_d(U_{T_0}U_{T_0}^\top) - 2\sigma_1(U_{T_0}B_{T_0}^\top) \leq \sigma_d(U_{T_0}U_{T_0}^\top) - 4\sqrt{2\sigma_1}O\left(\frac{\sigma_d}{\sqrt{\sigma_1}}\right)$. Hence $\sigma_d(U_{T_0})\geq\sqrt{\frac{\sigma_d}{2}}$. We can do the same thing on $V_{T_0}$.

First of all, by equations \eqref{Overview_formJ} and \eqref{Overview_formK}, we have
$$
\|J_{t+T_0}\|_{op}\leq c\varepsilon\left(1-\frac{\eta\sigma_d}{2}\right)^t\sqrt{\max\{m', d\}},
$$
and
$$
\|K_{t+T_0}\|_{op}\leq c\varepsilon\left(1-\frac{\eta\sigma_d}{2}\right)^t\sqrt{\max\{n', d\}}.
$$

Expanding $\Sigma - U_{t+1+T_0}V_{t+1+T_0}^\top$ by brute force\footnote{Please see \eqref{full_eq_Delta} for the result of the expanding.}, we get
\begin{eqnarray}
\Delta_{t+1}&\leq& \left(1-\frac{\eta\sigma_d}{2}\right)^2 \Delta_t + O(\eta^2\sigma_1^2)\Delta_t + O\left(\eta\varepsilon^2\sigma_1(m+n)\right)\left(1-\frac{\eta\sigma_d}{2}\right)^{2t}\notag\\
&\leq& \left(1-\frac{\eta\sigma_d}{2}\right) \Delta_t + O\left(\eta\varepsilon^2\sigma_1(m+n)\right)\left(1-\frac{\eta\sigma_d}{2}\right)^{2t}.\notag
\end{eqnarray}
Then,
\begin{eqnarray}
\frac{\Delta_{t+1}}{\left(1-\frac{\eta\sigma_d}{2}\right)^{t+1}} &\leq& \frac{\Delta_{t}}{\left(1-\frac{\eta\sigma_d}{2}\right)^{t}} + O\left(\eta\varepsilon^2\sigma_1(m+n)\right)\left(1-\frac{\eta\sigma_d}{2}\right)^{t-1}\notag\\
&\leq& \Delta_0 + O\left(\varepsilon^2\kappa(m+n)\right)\notag\\
&\leq& \frac{2}{5}\sigma_d.\notag
\end{eqnarray}
Thus we can now verify that $\Delta_{t}\leq \left(1-\frac{\eta\sigma_d}{2}\right)^{t}\frac{2}{5}\sigma_d$. Together with the linear convergence of $J$ and $K$, we know the gradient descent converge linearly. Notice that by using the operator norm of $\Delta_t$, we can easily prove that $\sigma_d(U)$ and $\sigma_d(V)$ in the next iteration is at least $\sqrt{\frac{\sigma_d}{2}}$ once given $\|B_{T_0+t}\|_F$ is small.

To give an upper bound on $\|B\|_F$, we still use equation \eqref{eq_B1}.

First of all, we have $\|P\|_F^2 + \|Q\|_F^2 = \|\Sigma - UV^\top\|_F^2$, since $P+Q = \Sigma - UV^\top$, and $\left\langle P, Q\right\rangle=0$. Hence, $\|P_{t+T_0}\|_F\leq \sqrt{d}\Delta_t$ and $\|Q_{t+T_0}\|_F\leq \sqrt{d}\Delta_t$.

Finally,
\begin{eqnarray}
\|B_{t+1+T_0}\|_F^2&\leq& \left(1+2\eta\left(1-\frac{\eta\sigma_d}{2}\right)^{t}\frac{2}{5}\sigma_d\right)\|B_{t+T_0}\|_F^2 + O\left(\eta\sigma_d(m+n)d\varepsilon^2\right)\left(1-\frac{\eta\sigma_d}{2}\right)^{2t}\notag\\
&& + O\left(\eta^2d\sigma_1\sigma_d^2\left(1-\frac{\eta\sigma_d}{2}\right)^{2t}\right).\notag
\end{eqnarray}
To solve this iteration formula, we first notice that the product of the main coefficient is bounded by a universal constant,
\begin{eqnarray}
\Xi_T:=\prod_{i=0}^{T-1}\left(1+2\eta\left(1-\frac{\eta\sigma_d}{2}\right)^{t}\frac{2}{5}\sigma_d\right) &\leq& \exp\left(\sum_{i=0}^{T-1}2\eta\left(1-\frac{\eta\sigma_d}{2}\right)^{t}\frac{2}{5}\sigma_d\right)\leq e^{\frac{8}{5}},\notag
\end{eqnarray}
we can then write it into an iteration formula about $\frac{\|B_{t+T_0}\|_F^2}{\Xi_t}$,
\begin{eqnarray}
\frac{\|B_{t+1+T_0}\|_F^2}{\Xi_{t+1}}&\leq&\frac{\|B_{t+T_0}\|_F^2}{\Xi_t} + O\left(\eta\sigma_d(m+n)d\varepsilon^2\right)\left(1-\frac{\eta\sigma_d}{2}\right)^{2t}\notag\\
&& + O\left(\eta^2d\sigma_1\sigma_d^2\left(1-\frac{\eta\sigma_d}{2}\right)^{2t}\right)\notag\\
&\leq& \|B_{T_0}\|_F^2 + O\left((m+n)d\varepsilon^2\right) + O\left(\eta d\sigma_1\sigma_d\right).\notag
\end{eqnarray}
By taking $\varepsilon=O\left(\frac{\sigma_d}{\sqrt{\sigma_1(m+n)d}}\right)$ and $\eta=O\left(\frac{\sigma_d}{d\sigma_1^2}\right)$, induction on $\|B\|_F$ holds.

\end{document}